\def\eps{\varepsilon}
\def\de{\partial}
\renewcommand{\t}{\theta}
\def\dis{\displaystyle}
\def\nd{\noindent}
\def\thend{\rule{3mm}{3mm}}
\def\R{\mathbb{R}}
\def\RD{\mathbb{R}^N}
\newcommand{\ird}{\int_{\mathbb{R}^N}}
\def\N{\mathbb{N}}
\newtheorem{thm}{Theorem}[section]
\newtheorem{prop}{Proposition}[section]
\newtheorem{lem}{Lemma}[section]
\newtheorem{deff}{Definition}[section]
\numberwithin{equation}{section}
\begin{document}

\title{
Solitary waves  for a class of generalized  Kadomtsev-Petviashvili equation  in  $\mathbb{R}^N$\\ with positive and zero mass} 

\author{\sf Claudianor O. Alves\thanks{Research of C. O. Alves partially supported by  CNPq/Brazil 304804/2017-7  and INCTMAT/CNPq/Brazil.} 
	\\
	\small{Universidade Federal de Campina Grande, }\\
	\small{Unidade Acad\^emica de Matem\'atica } \\
	\small{CEP: 58429-900 - Campina Grande-PB, Brazil}\\
	\small{ e-mail: coalves@mat.ufcg.edu}\\
	\vspace{1mm}\\
	{\sf Ol\'impio H. Miyagaki\thanks{ O.H.M. was partially supported by INCTMAT/CNPq/Brazil, CNPq/Brazil 304015/2014-8.
		}} \\
		{ \small Departamento de Matem\'atica}\\
		{\small Universidade Federal de  Juiz de Fora}\\
		{ \small CEP:  36036-330 - Juiz de Fora - MG, Brazil}\\
		{\small e-mail: ohmiyagaki@gmail.com}\vspace{3mm} \\		
		{\sf  Alessio Pomponio \thanks{ A. P. was  partially supported by  a grant of the group GNAMPA of INdAM and FRA2016 of Politecnico di Bari. }}\\
		{\small  Dipartimento di Meccanica, Matematica e Management}\\
		{\small  Politecnico di Bari}\\
		{\small  Via Orabona, 4, 70125 Bari,
			Italy}\\
		{\small  e-mail: alessio.pomponio@poliba.it}
	}
\date{}
\maketitle
\begin{abstract}
In this paper we use variational methods to establish existence of solitary waves  for a  class of   generalized Kadomtsev-Petviashvili $(GKP)$ equation in $\mathbb{R}^N$. The positive and zero mass  cases are considered. The main argument is to find a Palais Smale sequence satisfying a property related to Pohozaev identity, as in \cite{HIT}, which was used for the first time by \cite{jj}.
\end{abstract}

\vspace{0.5 cm}
\noindent
{\bf \footnotesize 2000 Mathematics Subject Classifications:} {\scriptsize 35A15, 35B65, 35Q51, 35Q53 }.\\
{\bf \footnotesize Key words}. {\scriptsize  Variational methods, solitary wave, soliton like solutions.}


\section{Introduction}

This paper deals with the  generalized Kadomtsev-Petviashvili (GKP) equation in  $\mathbb{R}^N$
\begin{equation}
\label{eq1*} u_t + u_{xxx}+(h(u))_x = D^{-1}_{x}\Delta_y u, 
\end{equation}
where $ (t,x,y)\times \R^{+}\times\R \times\R^{N-1},$ $ N \geq 2,$ $\displaystyle D^{-1}_{x} f(x,y)=\int_{-\infty}^{x}f(s,y)ds$ and $\displaystyle \Delta_y=\sum_{i=1}^{N-1}\frac{\partial^2}{\partial y^{2}_{i}}.$

We are interested, in particular, to the existence of a solitary wave for (\ref{eq1*}), that is, a solution $u$ of the form $u(t,x,y)=u(x-\tau t, y),$ with $\tau>0$. Hence, such function $u$ must satisfy the problem
\begin{equation*}\label{eq2*}
-\tau u_x + u_{xxx}+(h(u))_x = D^{-1}_{x}\Delta_y u, \qquad\hbox{in }\R^N,
\end{equation*}
or, equivalently,
\begin{equation}\label{eq4}
 (- u_{xx}+ \tau u + D^{-2}_{x}\Delta_y u -h(u))_x=0, \qquad\hbox{in }\R^N.
\end{equation}

We would like to point out that the equation (\ref{eq1*}) is a  two-dimensional  Korteweg-de Vries type equation when  $h(t)=t^{2}$, which  is a model for long dispersive waves, essentially unidimensional, but having small transverse effects, see  \cite{GKP}. For the   Cauchy problem associated with  equation  (\ref{eq1*}), we would like to cite, e.g. \cite{Bourgain, Fam, IsazaM} and the survey \cite{Saut}.  Generalized Kadomtsev-Petviashvili $(GKP)$ equation has been studied by many authors, see for instance,  G\"ung\"or  and Winternitz \cite{gungor}, Tian and Gao \cite{tian}, Zhang, Xu and Ma \cite{zhang} and references therein, where they focused in the study of solitary or soliton solutions, complete integrability, etc. 
The pioneering work is due to De Bouard and Saut in \cite{SautB2,SautB1}, where they treated a nonlinearity $h(s)=|s|^{p}s$ assuming that $p=\frac{m}{n}$, with $m$ and $n$ relatively prime, and $n$ is odd and $1 \leq  p < 4 ,$ if $N=2,$ or $1 \leq p< 4/3,$ if $N=3$.
In the mentioned paper, De Bouard and Saut obtained existence results for  equation (\ref{eq1*}) by combining minimization with concentration compactness  theorem \cite{Lions}. In \cite{SautB3}, instead, the authors proved that the solutions obtained in former papers are cylindrically symmetric. For the regularity of the solutions they assumed $ p=2,3,4$ if $N=2,$ and $p=2$ if $N=3.$ In an interesting paper \cite{Klein}, Klein and Saut used a  numerical simulation to analyse several quantitative properties of the De Bouard and Saut existence results, such as, blow-up, stability or instability of solitary waves. Also,  in this paper,  they study the zero-mass case and make a survey of various mathematical results on this subject. 
   In Willem \cite{Willem} and Wang and Willem \cite{WangWillem} the existence of a solitary waves for a class of $(GKP)$ problems of the type (\ref{eq1*}) were considered with an autonomous continuous nonlinearity $h$ with $N=2,$  and existence and multiplicity results have been proved, respectively.
 Their results were obtained by applying the mountain pass theorem \cite{AR} and Lusternik-Schnirelman theory, respectively.  In \cite{Liang}, Liang  and Su have proved the existence of solution for a class considered of the  $(GKP)$ problem  (\ref{eq1*}), which involves  a non autonomous  continuous function with $N \geq 2,$ while Xuan \cite{Xuan} treated the autonomous case in higher dimension. Applying the linking theorem stated in \cite{Szulkin}, He and Zou in \cite{HeZou} obtained nontrivial solution for  (\ref{eq1*}), in higher dimension without Ambrosetti-Rabinowitz growth condition given in \cite{AR} (see also \cite{Zou} and \cite{chineses} for multiplicity results).  We  recall that in all the  above papers, the regularity of the solitary waves have not been treated. Recently Alves and Miyagaki \cite{AlvesGKP}  have treated the case non autonomous, getting results similar to those obtained in, for instance, \cite{SautB2,SautB1}, and also the regularity properties of the solutions. Paumond in \cite{Paumond} obtained nonsymmetric solutions for  (\ref{eq1*}) with $N=5,$  extending those results in \cite{SautB2,SautB1}.

Motivated by above articles dealing with Kadomtsev-Petviashvili equation as well as by the fact that the variational methods can be employed to find a solitary waves for (\ref{eq1*}), this paper concerns with the existence of  solitary waves for problem \ref{eq4} with $\tau=1,$ in higher dimensions.

In whole this paper, the function $h$ belongs to $C^1(\R^N)$ and $h(0)=0$. Here, we are going to work with two classes of problems: 
the Positive Mass case and the Negative Mass case.
\\

\noindent {\bf Problem 1:} {\it Positive Mass}. In this case, we assume that $h$ satisfies the following conditions: 
\begin{enumerate}[label=($h_\arabic{*})$, ref=$h_\arabic{*}$]
\item \label{hp} for some $p\in (1, \bar{N}-1),$ where  $\bar{N}=\frac{2(2N-1)}{2N-3};$
\[
\lim_{|t| \to +\infty}\frac{h(t)}{|t|^p}=0;
\]
\item \label{h'}  $h'(0)=0$;

\item \label{h>0} there exists $\xi > 0$ such that $2H(\xi)-\xi^{2}>0$, where $H(t)=\int_{0}^{t}h(r)dr$.
\end{enumerate}

Our main result for this class of problems is the following

\begin{thm}\label{T1}  Suppose \eqref{hp}-\eqref{h>0}, then the problem (\ref{eq4}) possesses at least a nontrivial  solution. 
\end{thm}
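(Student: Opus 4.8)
The plan is to cast \eqref{eq4} in variational form. Let $Y$ be the Hilbert space obtained by completing $\partial_x C^\infty_0(\mathbb{R}^N)$ with respect to
\[
\|u\|_Y^2=\int_{\mathbb{R}^N}\Big(u_x^2+u^2+|D^{-1}_x\nabla_y u|^2\Big),
\]
and recall the anisotropic Sobolev embedding $Y\hookrightarrow L^q(\mathbb{R}^N)$ for $2\le q\le\bar N$, together with the compactness of $Y\hookrightarrow L^q_{\mathrm{loc}}(\mathbb{R}^N)$ for $2\le q<\bar N$ (as in \cite{SautB1,Willem,WangWillem}); assumption \eqref{hp} is exactly what makes the nonlinearity subcritical in this scale, since it yields $|h(t)|\le\eps|t|+C_\eps|t|^p$ with $p+1<\bar N$. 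Writing $G(t)=H(t)-\tfrac12 t^2$, the energy functional
\[
I(u)=\frac12\int_{\mathbb{R}^N}\Big(u_x^2+|D^{-1}_x\nabla_y u|^2\Big)-\int_{\mathbb{R}^N}G(u)
\]
belongs to $C^1(Y,\mathbb{R})$ and its critical points are precisely the weak solutions of \eqref{eq4}. In these variables $I$ has the Berestycki--Lions structure: \eqref{h'} gives $G(t)=-\tfrac12 t^2+o(t^2)$ near the origin (the positive mass feature), and \eqref{h>0} is the condition $G(\xi)>0$ for some $\xi>0$.

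Next I would check the mountain--pass geometry of $I$. Near $0$, combining $G(t)=-\tfrac12 t^2+o(t^2)$ with $|G(t)|\le\eps t^2+C_\eps|t|^{p+1}$ and $Y\hookrightarrow L^2\cap L^{p+1}$ gives $I(u)\ge c_0\|u\|_Y^2-C\|u\|_Y^{p+1}$ for $\|u\|_Y$ small, so $I$ is bounded below by a positive constant on some small sphere. For a point where $I<0$ I would first produce, by a construction in the spirit of Berestycki--Lions adapted to the constraint that elements of $Y$ have vanishing $x$--mean on each line parallel to the $x$--axis, a function $w\in Y\setminus\{0\}$ with $\int_{\mathbb{R}^N}G(w)>0$; then, using the Kadomtsev--Petviashvili scaling $w_\lambda(x,y)=w(x/\lambda,y/\lambda^2)$,
\[
I(w_\lambda)=\frac{\lambda^{2N-3}}{2}\int_{\mathbb{R}^N}\Big(w_x^2+|D^{-1}_x\nabla_y w|^2\Big)-\lambda^{2N-1}\int_{\mathbb{R}^N}G(w)\longrightarrow-\infty
\]
as $\lambda\to+\infty$, since $2N-1>2N-3$. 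This fixes a mountain--pass level $c>0$, with endpoint $e=w_\lambda$ for $\lambda$ large.

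The core of the argument is the Jeanjean \cite{jj} / Hirata--Ikoma--Tanaka \cite{HIT} device. Introduce on $\mathbb{R}\times Y$ the augmented functional
\[
\tilde I(\t,u)=I\big(u(e^{-\t}\cdot,e^{-2\t}\cdot)\big)=\frac{e^{(2N-3)\t}}{2}\int_{\mathbb{R}^N}\Big(u_x^2+|D^{-1}_x\nabla_y u|^2\Big)-e^{(2N-1)\t}\int_{\mathbb{R}^N}G(u),
\]
check (as in \cite{jj,HIT}) that its mountain--pass level over paths joining $(0,0)$ to $(0,e)$ is still $c$, and apply the mountain--pass theorem to get a Palais--Smale sequence $(\t_n,u_n)$ for $\tilde I$ at level $c$. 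Putting $v_n:=u_n(e^{-\t_n}\cdot,e^{-2\t_n}\cdot)\in Y$, the component $\partial_u\tilde I(\t_n,u_n)\to0$ gives $I(v_n)\to c$ and $I'(v_n)\to0$ in $Y^*$, while $\partial_\t\tilde I(\t_n,u_n)\to0$ yields the approximate Pohozaev identity
\[
(2N-3)\int_{\mathbb{R}^N}\Big(v_{n,x}^2+|D^{-1}_x\nabla_y v_n|^2\Big)-2(2N-1)\int_{\mathbb{R}^N}G(v_n)\longrightarrow0.
\]
Combined with $I(v_n)\to c$ this forces $\int_{\mathbb{R}^N}\big(v_{n,x}^2+|D^{-1}_x\nabla_y v_n|^2\big)\to(2N-1)c$; then the positive--mass coercivity $-\int G(v_n)\ge\tfrac14\|v_n\|_{L^2}^2-C\int|v_n|^{p+1}$, together with an anisotropic Gagliardo--Nirenberg interpolation of $\|v_n\|_{L^{p+1}}$ between $\|v_n\|_{L^2}$ and the now bounded anisotropic gradient, shows that $\{v_n\}$ is bounded in $Y$.

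Finally I would extract a nontrivial solution. If $\sup_{z\in\mathbb{R}^N}\int_{B_1(z)}|v_n|^2\to0$, a Lions--type vanishing lemma in $Y$ gives $v_n\to0$ in $L^q$ for every $q\in(2,\bar N)$, hence $\int h(v_n)v_n\to0$, and then $\langle I'(v_n),v_n\rangle\to0$ yields $\|v_n\|_Y\to0$, contradicting $I(v_n)\to c>0$. So there are $z_n$ with $\int_{B_1(z_n)}|v_n|^2\ge\delta>0$; since $h$ is autonomous, $\tilde v_n:=v_n(\cdot+z_n)$ is still a bounded Palais--Smale sequence, so along a subsequence $\tilde v_n\rightharpoonup v$ in $Y$ with $v\ne0$ (by the local compact embedding). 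Passing to the limit in $I'(\tilde v_n)\to0$ — weak convergence for the quadratic part, the local compact embeddings and \eqref{hp}, \eqref{h'} for the term $\int h(\tilde v_n)\varphi$ — gives $I'(v)=0$, i.e.\ $v$ is a nontrivial solution of \eqref{eq4}. The main obstacles I anticipate are: (i) the functional--analytic groundwork, in particular the local compactness of $Y\hookrightarrow L^q_{\mathrm{loc}}$ which makes the passage to the limit possible and the existence of $w\in Y$ with $\int G(w)>0$ under the zero--mean constraint; and (ii) upgrading the boundedness of the Pohozaev--Palais--Smale sequence from the anisotropic gradient part, which the scaling identity controls directly, to the full $Y$--norm in the positive--mass regime.
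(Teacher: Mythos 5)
Your proposal follows essentially the same route as the paper: the same anisotropic functional setting, the same mountain--pass geometry via the KP scaling $w(x/\lambda,y/\lambda^2)$, the Jeanjean/Hirata--Ikoma--Tanaka augmented functional $\tilde I(\theta,u)$ producing a Palais--Smale sequence with an approximate Pohozaev identity, and the Lions-type vanishing lemma plus translation to obtain a nontrivial weak limit. The only (harmless) deviation is in upgrading boundedness from $\|\cdot\|_0$ to the full norm: you use the coercivity of $-G$ together with interpolation of $L^{p+1}$ between $L^2$ and $L^{\bar N}$, whereas the paper tests $\partial_u\tilde I(\theta_n,u_n)$ against $u_n$ and absorbs the $\delta|u_n|_2^2$ term; both are correct.
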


 Our second class of problems is the following\\
 
 \noindent {\bf Problem 2:}\, {\it Zero Mass.} \\
 
 \noindent We suppose \eqref{h>0} and the conditions below
\begin{enumerate}[label=($h_\arabic{*})$, ref=$h_\arabic{*}$]\setcounter{enumi}{3}
	\item \label{hf}   $h(t)=f(t)+t$;
	\item\label{hf5} $\displaystyle \lim_{t \to 0}\frac{f(t)}{|t|^{\bar{N}-1}}=\lim_{|t| \to +\infty}\frac{f(t)}{|t|^{\bar{N}-1}}=0$.
\end{enumerate}
Observe that, by \eqref{h>0} and \eqref{hf}, defining $F(t)=\int_{0}^{t}f(r)dr$, we infer that 
\begin{equation}\label{f>0}
\hbox{there exists }\xi > 0\hbox{ such that }F(\xi)>0.
\end{equation}
For the Zero Mass case, the problem (\ref{eq4}) will be written of the form
\begin{equation}\label{eq4'}
(-u_{xx} - f(u) + D^{-2}_{x} \Delta u_{y})_x=0, \quad \mbox{in} \ \mathbb{R}^N.
\end{equation}
\\

Related with this class of problems we have the following result

\begin{thm}\label{T2}  Suppose \eqref{h>0}-\eqref{hf5}, then problem (\ref{eq4'}) possesses at least a nontrivial  solution. 
\end{thm}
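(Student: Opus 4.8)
The plan is to mimic the variational scheme used for Theorem \ref{T1}, but now working in a homogeneous-type space adapted to the zero mass case, since under \eqref{hf}--\eqref{hf5} the quadratic part of the energy carries no coercive $L^2$ contribution. First I would fix a functional setting: following Willem and the usual treatment of the KP operator, introduce the variable $v$ with $v_x=u$ and set $Y$ to be the closure of $\{\partial_x \varphi: \varphi\in C_0^\infty(\mathbb{R}^N)\}$ under the norm $\|u\|^2=\int_{\mathbb{R}^N}\bigl(u_x^2+(D_x^{-1}\nabla_y u)^2\bigr)$, so that the natural energy functional for \eqref{eq4'} is
\begin{equation*}
I(u)=\frac12\int_{\mathbb{R}^N}\bigl(u_x^2+|D_x^{-1}\nabla_y u|^2\bigr)-\int_{\mathbb{R}^N}F(u).
\end{equation*}
The growth hypothesis \eqref{hf5} is precisely what makes $u\mapsto\int F(u)$ well defined and $C^1$ on $Y$: the exponent $\bar N-1$ matches the critical exponent for the anisotropic Sobolev embedding $Y\hookrightarrow L^{\bar N}(\mathbb{R}^N)$ (this embedding, with the relevant interpolation inequalities, is the analytic backbone and I would either quote it from \cite{Willem}/\cite{AlvesGKP} or prove it), and the double limit in \eqref{hf5} lets one split $|F(t)|\le \eps|t|^{\bar N}+C_\eps|t|^{\bar N}$-type estimates into small pieces near $0$ and near $\infty$.

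Next I would verify the mountain pass geometry. Condition \eqref{hf5} gives, for every $\eps>0$, a bound $|F(t)|\le\eps|t|^{\bar N}$ on a neighborhood of $t=0$ together with $|F(t)|\le C|t|^{\bar N}$ globally (using that $\bar N-1$ controls $f$ from both ends, hence $F$ grows slower than $|t|^{\bar N}$ everywhere after absorbing constants); combined with $Y\hookrightarrow L^{\bar N}$ this yields $\int F(u)\le \tfrac14\|u\|^2$ for $\|u\|$ small, so $I(u)\ge \tfrac14\|u\|^2>0$ on a small sphere. For a point where $I<0$ I would use the dilation $u_t(x,y):=u(x/t,y/t^2)$ adapted to the KP scaling, under which the two quadratic terms scale with a fixed positive power of $t$ while the potential term $\int F(u_t)$ scales like $t^{2N-1}\int F(u)$; choosing a test function built from the $\xi$ in \eqref{f>0} (roughly, a function that spends most of its mass near the level $\xi$, as in Berestycki–Lions type constructions) makes $\int F$ positive, and then $I(u_t)\to-\infty$ as $t\to\infty$. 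This produces a mountain pass level $c>0$.

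Then, exactly as announced in the abstract, I would build a Palais–Smale sequence $(u_n)$ at level $c$ that additionally \emph{asymptotically satisfies the Pohozaev identity} for \eqref{eq4'} — this is the Jeanjean-type trick, applied via the monotonicity-in-a-parameter argument of Hirata–Ikoma–Tanaka \cite{HIT} (cited in the excerpt as \cite{HIT}, attributed to \cite{jj}), working with the family $I_\theta$ obtained by inserting a parameter in front of the potential term and using the a.e. differentiability of $\theta\mapsto c(\theta)$. The Pohozaev-type bound is what replaces a Ambrosetti–Rabinowitz condition and gives boundedness of $(u_n)$ in $Y$: combining $I(u_n)\to c$, $I'(u_n)\to0$ in the norm, and the approximate Pohozaev relation yields a coercive combination controlling $\|u_n\|^2$. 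Finally I would pass to the limit: up to translations in the $y$ (and, if needed, $x$) variables, a concentration-compactness / Lions-type vanishing dichotomy argument shows $u_n\rightharpoonup u\neq0$ weakly in $Y$ — vanishing is excluded because it would force $\int F(u_n)\to0$ and hence $c\le\liminf \tfrac12\|u_n\|^2$ to clash with $I(u_n)\to c>0$ together with the Pohozaev information — and weak lower semicontinuity plus the $C^1$-continuity of the potential term (justified by \eqref{hf5} and a Brezis–Lieb splitting in $L^{\bar N}$) give $I'(u)=0$, i.e. $u$ is a nontrivial solution of \eqref{eq4'}.

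The main obstacle I expect is the compactness step: \eqref{eq4'} is translation-invariant in all $N$ variables and, being a zero mass problem, there is no spectral gap to lean on, so one must genuinely run a concentration-compactness argument in the homogeneous space $Y$ and rule out both vanishing and (after translations) splitting; the role of the extra Pohozaev/Jeanjean condition on the PS sequence is precisely to make dichotomy untenable and to force the nonzero weak limit. A secondary technical point is establishing the anisotropic embedding $Y\hookrightarrow L^{\bar N}(\mathbb{R}^N)$ with the sharp exponent $\bar N=\tfrac{2(2N-1)}{2N-3}$ and checking that $u\mapsto\int F(u)$ is $C^1$ on $Y$ under \eqref{hf5}; once these are in place the rest parallels the positive mass argument of Theorem \ref{T1}.
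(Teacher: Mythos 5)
Your overall strategy is the one the paper follows: work in the homogeneous space ($X_0$ in the paper's notation), set up the mountain pass geometry via the KP dilation $u(x/t,y/t^2)$, run the Jeanjean--Hirata--Ikoma--Tanaka auxiliary functional $\tilde I_0(\t,u)=\frac{e^{(2N-3)\t}}{2}\|u\|_0^2-e^{(2N-1)\t}\int F(u)$ to obtain a Palais--Smale sequence that almost satisfies the Pohozaev identity, deduce boundedness in $X_0$ from the first two relations of the resulting system, and conclude with a nonzero weak limit which is a critical point. However, the step you yourself flag as ``the main obstacle'' --- excluding vanishing --- is exactly where the zero-mass case needs a new ingredient, and your sketch of it does not work as stated. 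The Lions-type lemma available in this setting (Lemma \ref{Lions}) is formulated for sequences bounded in $X$ and measures the local $L^2$ mass $\sup_{(x,y)}\int_{B_r((x,y))}|u_n|^2$; your sequence is bounded only in $X_0$, which embeds into $L^{\bar N}$ but carries no global $L^2$ control, so the usual vanishing alternative cannot even be formulated. Moreover, since $f$ has critical growth $|t|^{\bar N-1}$ both at $0$ and at infinity, convergence to zero in $L^q$ for subcritical $q\in(2,\bar N)$ would in any case not force $\int f(u_n)u_n\to 0$ or $\int F(u_n)\to 0$, so the contradiction you propose ($\int F(u_n)\to 0$ clashing with the energy and Pohozaev relations) is not reachable by the standard route.

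The paper closes this gap with a dedicated technical lemma (Lemma \ref{le:supK}): for a bounded sequence $\{w_n\}\subset X_0$, if $\sup_{(x,y)}\int_{K(x,y)}|f(w_n)w_n|\,dV\to 0$ over the unit cells $K(x,y)=(x-1,x+1)\times B_1(y)$, then $\int_{\R^N}f(w_n)w_n\,dV\to 0$. Its proof uses the pure critical bound $|f(t)t|\le C|t|^{\bar N}$, the local anisotropic estimate \eqref{Estimativa 2}, and an interpolation with exponent $\lambda=2/\bar N$ followed by summation over a covering of $\R^N$. The non-vanishing input is then the lower bound $\int f(u_n)u_n\ge c>0$, extracted from the third equation of the system together with $\|u_n\|_0^2\to\frac{2N-1}{N-1}\sigma_0>0$; Lemma \ref{le:supK} converts this into translation points $(x_n,y_n)$ with $\int_{K(x_n,y_n)}|f(u_n)u_n|\ge\bar c>0$, and the splitting $|f(t)t|\le\frac{\bar c}{2M}|t|^{\bar N}+C|t|^2$ turns that into a local $L^2$ lower bound, whence the translated weak limit is nonzero by the compact local embedding \eqref{compact2}. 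Two smaller remarks: you do not need to rule out dichotomy or splitting, since only a nontrivial critical point (not one at level $\sigma_0$) is sought, so a nonzero weak limit plus the weak $L^{\bar N/(\bar N-1)}$ convergence of $f(v_n)$ to $f(v)$ already yields $I_0'(v)=0$; and your displayed estimate $|F(t)|\le\eps|t|^{\bar N}+C_\eps|t|^{\bar N}$ should read with two distinct exponents or, as used in the paper, simply $|f(t)t|\le C|t|^{\bar N}$.
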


One of the main motivations to study Problems 1 and 2 comes from the seminal papers due to Berestycki and Lions in \cite{BL}, for $N\geq 3,$ and  to Berestycki, Gallou\"et and Kavian in \cite{kavian}, for $N=2,$ where the authors proved the existence of a ground state, namely a solution which minimizes
the action among all the nontrivial solutions, for the problem
$$ \left\{ \begin{array}{l}
-\Delta u  = g(u), \quad \mbox{in} \ \R^N,\\
u \in H^1(\R^N), 
\end{array}\right. $$
under the following assumptions on the nonlinearity $g$:
\begin{description}
	\item[$	(g_1)$] $g$ is an odd and  continuous function;
	\item[$	(g_2)$] $-\infty < \liminf_{s \rightarrow 0^+}\frac{ g(s)}{s} \leq \limsup_{s \rightarrow 0^+} \frac{ g(s)}{s} = −m \leq 0;$
	\item[$	(g_3)$] $-\infty < \limsup_{s \rightarrow +\infty}\frac{ g(s)}{s^{2^*-1}}\leq  0,  \ (N \geq 3) , \limsup_{s\rightarrow +\infty}\frac{ g(s)}{e^{\alpha s^2}}\leq 0, \forall \alpha>0, \ (N=2);$
	\item[$	(g_4)$] there exists $\tau > 0 $ such that $G(\tau) := \int_{0}^{\tau}
	g(s) ds > 0;$
\end{description}	
where $2^* = 2N/(N-2).$
They considered two cases $m<0$ and $m=0$, so called \lq\lq positive mass\rq\rq and\lq\lq zero mass\rq\rq cases,
respectively. The  last case  is related to the Yang-Mills equation, see, e.g. \cite{Gidas}.  After these two pioneering papers, many researches worked in this subject, extending or improving in several ways, see, for instance, \cite{Monteo1, Monteo2, AP, Benci,  Micheletti, HIT,JT,AW}, and references therein: clearly the list is not complete.

It is very important to point out that in the most part of the above mentioned papers, which involves the Laplacian operator, the radial functions space plays an important role because of its compactness embedding properties. At contrary, for problems involving the  Kadomtsev-Petviashvili equation,  we do not have a similar result for radial functions any longer and therefore we need to use different arguments to prove our main results. Here, we will adapt for our problem the variational approach explored in  Jeanjean \cite{jeanjean} and  Hirata,  Ikoma and  Tanaka \cite{HIT} (see also \cite{ADP,CPDS})  by considering  an auxiliary functional that allows to construct a suitable Palais-Smale sequence, which {\em almost} satisfies a Pohozaev type identity, see Sections 3 and 4 for more details.   

The paper is organized as follows. In Section 2 we present our functional setting describing its embeddings properties. In the last two sections, instead, we treat the positive mass case and the zero mass case, proving our main existence results.

\vspace{0.5 cm}

\noindent {\bf Notations:} \, Throughout the paper, unless explicitly stated, the symbol $C$ will always denote a generic positive constant,
which may vary from line to line. The symbols \lq\lq $\rightarrow$ \rq\rq and \lq\lq $\rightharpoonup$ \rq\rq denote, respectively, strong and weak convergence, and  all the convergences involving sequences in $n\in \N$ are as $n\rightarrow \infty.$ Moreover we denote by  $|\cdot |_q$ the usual norm of Lebesgue space $L^q(\R^N)$, for $q\in [1,+\infty]$. Finally, for all $(x,y)\in \R \times \R^{N-1}$ and $r>0$, we denote by $B_r((x,y))$ the ball centred in $(x,y)$ with radius $r$. 

\section{Functional setting }

We intend to study our problem using variational methods and, as first step, we introduce our functional setting.

\begin{deff} On $Y=\{ g_x: g \in C_{0}^{\infty}(\R^N)\}$ define the inner product  
\begin{equation*}
(u,v)=\int_{\R^N} \left( u_x v_x +D^{-1}_{x}\nabla_y  u \cdot D^{-1}_{x} \nabla_y v + uv\right)  dV,
\end{equation*}
with the corresponding norm 
\begin{equation*}
\|u\|=\left(\int_{\R^N} \left( |u_x|^2 +|D^{-1}_{x}\nabla_y u|^2  + |u|^2\right)  dV \right)^{\frac{1}{2}},
\end{equation*}
where $\nabla_y=(\frac{\partial}{\partial y_1}, \ldots,\frac{\partial}{\partial y_{N-1}} ) $ and $dV= dx \ dy.$

We say that $u:\R^N \rightarrow \R$ belongs to $X$  if there exists  a sequence $\{u_n\}\subset  Y$ such that
\begin{description}
\item[a)] $ u_n \rightarrow u$ a.e. on $\R^N$,
\item[b)] $\|u_j - u_k\|\rightarrow 0$, as $ j, k \rightarrow \infty.$
\end{description}
\end{deff}

\begin{deff} On $Y=\{ g_x: g \in C_{0}^{\infty}(\R^N)\}$ define the inner product  
\begin{equation*}
(u,v)_0=\int_{\R^N} \left( u_x v_x +D^{-1}_{x}\nabla_y  u \cdot D^{-1}_{x} \nabla_y v \right)  dV
\end{equation*}
with the corresponding norm 
\begin{equation*}
\|u\|_0=\left(\int_{\R^N} \left( |u_x|^2 +|D^{-1}_{x}\nabla_yu|^2  \right)  dV \right)^{\frac{1}{2}}.
\end{equation*}
We say that $u:\R^N \rightarrow \R$ belongs to $X_0$  if there exists  a sequence $\{u_n\}\subset  Y$ such that
\begin{description}
\item[a)] $ u_n \rightarrow u$ a.e. on $\R^N$,
\item[b)] $\|u_j - u_k\|_0\rightarrow 0$, as  $j, k \rightarrow \infty.$
\end{description}
\end{deff}

From definition of $X$ and $X_0$, the embedding $(X,\|\,\,\|) \hookrightarrow (X_0,\|\,\,\|_0)$ is continuous.  

The spaces $X$ and $X_0$ endowed with inner products and norms given above are Hilbert spaces. Moreover, we have the following continuous embeddings, whose proof can be found in \cite[Theorem 15.7  p. 323]{Besov}, also in \cite[Lemma 2.1]{Liang} and \cite[Lemma 2.3]{Xuan},
\begin{equation} \label{continuous}  
X \hookrightarrow L^{q}(\R^N), \quad\mbox{for} \ 1\leq q \leq \bar{N}.
\end{equation}
Regarding to compact embeddings, De  Bouard and Saut in \cite[Remark 1.1]{SautB1} for $N=2,3,$ and Xuan in \cite[Lemma 2.4]{Xuan} for higher dimensions, they have proved that the embeddings below 

\begin{equation}\label{compact}  
X \hookrightarrow L^{q}_{loc}(\R^N), \quad \mbox{for} \ 1\leq q < \bar{N},
\end{equation}
are compacts. 

The following lemma is a Lions' type result, see \cite{Lions}, for the space $X$, whose the proof can be found in 
\cite[Lemma 2.5]{Xuan}.

\begin{lem} \label{Lions} (\cite[Lemma 2.5]{Xuan})\,If $\{u_n\}$ is a sequence bounded in $X$ and if
	$$
	\sup_{(x,y)\in \mathbb{R}^N}\int_{B_r((x,y))}|u_n|^{2}\,dV \to 0,\ \mbox{as}\ n \rightarrow \infty,
	$$
	then $u_n \to 0$ in $L^{q}(\mathbb{R}^N)$ for all $q \in (2,\bar{N})$.
\end{lem}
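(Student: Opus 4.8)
The plan is to adapt the classical concentration--compactness (vanishing) argument of Lions, the only non-standard ingredient being a localized form of the embedding \eqref{continuous}. As a preliminary reduction, note that it is enough to prove $u_n\to 0$ in $L^{q_1}(\mathbb{R}^N)$ for the \emph{single} exponent $q_1:=4-\tfrac{4}{\bar N}$, which lies in $(2,\bar N)$ because $\bar N>2$. Indeed, once this is known, for $q\in(2,q_1)$ the interpolation inequality $|u_n|_q\le|u_n|_2^{1-\mu}|u_n|_{q_1}^{\mu}$ together with $|u_n|_2\le\|u_n\|$ gives $|u_n|_q\to0$, while for $q\in(q_1,\bar N)$ one interpolates between $L^{q_1}$ and $L^{\bar N}$, using that $(u_n)$ is bounded in $L^{\bar N}(\mathbb{R}^N)$ by \eqref{continuous}.

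With the radius $r$ of the statement, cover $\mathbb{R}^N$ by balls $\{B_r(z_i)\}_i$ in the standard way, so that the doubled balls $\{B_{2r}(z_i)\}_i$ have bounded overlap: no point of $\mathbb{R}^N$ belongs to more than $K=K(N)$ of them. Let $\theta\in(0,1)$ be defined by $\frac{1}{q_1}=\frac{1-\theta}{2}+\frac{\theta}{\bar N}$; a direct computation gives $\theta q_1=2$. Hölder's inequality on each ball yields
\[
\int_{B_r(z_i)}|u_n|^{q_1}\,dV\le\Big(\int_{B_r(z_i)}|u_n|^{2}\,dV\Big)^{\frac{(1-\theta)q_1}{2}}\Big(\int_{B_r(z_i)}|u_n|^{\bar N}\,dV\Big)^{\frac{2}{\bar N}}.
\]
The crucial point is a local counterpart of \eqref{continuous}: there is $C=C(r)>0$, independent of $i$, such that
\[
\Big(\int_{B_r(z_i)}|u_n|^{\bar N}\,dV\Big)^{\frac{2}{\bar N}}\le C\int_{B_{2r}(z_i)}\big(|(u_n)_x|^2+|D^{-1}_x\nabla_y u_n|^2+|u_n|^2\big)\,dV.
\]

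Combining the two displays and estimating the first factor by $\sup_{z\in\mathbb{R}^N}\int_{B_r(z)}|u_n|^2\,dV$, then summing over $i$ and using that the integrands are fixed $L^1(\mathbb{R}^N)$ functions while the balls $B_{2r}(z_i)$ have bounded overlap, we get
\[
|u_n|_{q_1}^{q_1}\le\sum_i\int_{B_r(z_i)}|u_n|^{q_1}\,dV\le C\,K\Big(\sup_{z\in\mathbb{R}^N}\int_{B_r(z)}|u_n|^2\,dV\Big)^{\frac{(1-\theta)q_1}{2}}\|u_n\|^2 .
\]
Since $(u_n)$ is bounded in $X$ and, by hypothesis, the supremum tends to $0$, the right-hand side tends to $0$; hence $|u_n|_{q_1}\to0$, and the reduction in the first paragraph gives $u_n\to0$ in $L^{q}(\mathbb{R}^N)$ for every $q\in(2,\bar N)$.

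The main obstacle is the second displayed inequality, i.e.\ a localization of the anisotropic Sobolev embedding \eqref{continuous} with constant uniform in the centre of the ball. The difficulty is that $D^{-1}_x\nabla_y$ is nonlocal in the $x$ variable and does not commute with multiplication by cut-off functions, so one cannot simply truncate $u_n$ and invoke \eqref{continuous} directly; one must instead revisit the proof of the anisotropic embedding (as in \cite[Theorem 15.7 p. 323]{Besov} or \cite[Lemma 2.4]{Xuan}) on a ball, working directly with the integrals over $B_{2r}(z_i)$. Once this localized inequality is secured, the remainder is the routine Lions covering scheme.
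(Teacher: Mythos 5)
Your argument is the standard Lions vanishing proof (H\"older on a bounded-overlap cover of balls, a local anisotropic embedding with translation-invariant constant, then interpolation to cover all $q\in(2,\bar N)$), which is exactly the route of the cited \cite[Lemma 2.5]{Xuan}; the paper itself offers no independent proof, so your proposal matches the intended argument. The localized embedding you single out as the main obstacle is precisely inequality (\ref{Estimativa 2}) of Section 2, quoted from \cite[Chapter 3]{Besov} and uniform over translates of a fixed ball because $D^{-1}_x$ commutes with translations, so it can simply be invoked rather than re-derived.
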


With respect to the continuous embedding $X_0 \hookrightarrow L^{\bar{N}}(\mathbb{R}^N)$, we have the following result
\begin{lem}\label{le:l6} (\cite[Lemma 2.3]{Xuan}) There exists a constant $S>0$ such that
\begin{equation*}
|u|_{6} \leq S \left(\int_{\mathbb{R}^N}(|u_x|^2 +|D^{-1}_{x}\nabla_y u|^2)\,dV \right)^{\frac{1}{2}}, \quad \forall u \in X_0.
\end{equation*}	
\end{lem}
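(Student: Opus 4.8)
I will prove the sharp form of this embedding, namely $|u|_{\bar N}\le S\,\|u\|_0$ for all $u\in X_0$, with $\bar N=\tfrac{2(2N-1)}{2N-3}$ (so the statement is the case $\bar N=6$, which occurs precisely for $N=2$); indeed $\bar N$ is the only admissible exponent here, being the one forced by the anisotropic scaling $(\xi,\eta)\mapsto(\lambda\xi,\lambda^2\eta)$. The plan is to pass to the Fourier side and reduce the inequality to the boundedness of a Fourier multiplier. Let $\xi$ be the frequency dual to $x$ and $\eta=(\eta_1,\dots,\eta_{N-1})$ those dual to $y$. For $u\in Y$, write $u=g_x$ with $g\in C_0^\infty(\R^N)$; then $D^{-1}_x u=g$ and $D^{-1}_x\nabla_y u=\nabla_y g$, so by Plancherel
\[
\|u\|_0^2=\ird\big(|u_x|^2+|D^{-1}_x\nabla_y u|^2\big)\,dV=\ird\big(\xi^4+|\eta|^2\big)\,|\widehat g(\xi,\eta)|^2\,d\xi\,d\eta .
\]
Setting $\widehat v:=(\xi^4+|\eta|^2)^{1/2}\,\widehat g$ we get $|v|_2=\|u\|_0$ and $\widehat u=i\xi\,\widehat g=m\,\widehat v$, where $m(\xi,\eta):=i\xi\,(\xi^4+|\eta|^2)^{-1/2}$. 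Hence it suffices to show that the multiplier operator $T$ with symbol $m$ is bounded from $L^2(\R^N)$ into $L^{\bar N}(\R^N)$; the statement for a general $u\in X_0$ then follows by choosing an approximating sequence $\{u_n\}\subset Y$ with $u_n\to u$ a.e.\ and $\|u_j-u_k\|_0\to0$, since then $|u_j-u_k|_{\bar N}\le S\,\|u_j-u_k\|_0\to0$ and Fatou's lemma transfers the bound to the limit.

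Next I would exploit the anisotropic homogeneity of $m$. Introduce the dilations $\delta_\lambda(\xi,\eta)=(\lambda\xi,\lambda^2\eta)$, $\lambda>0$, whose Jacobian is $\lambda^{Q}$ with homogeneous dimension $Q:=2N-1$, and the homogeneous norm $\rho(x,y):=(|x|^2+|y|)^{1/2}$, so that $\rho(\delta_\lambda z)=\lambda\,\rho(z)$ and $|\{\rho\le r\}|\le C\,r^{Q}$. A direct check shows that $m$ is smooth on $\R^N\setminus\{0\}$, that $m(\delta_\lambda\zeta)=\lambda^{-1}m(\zeta)$, and that $|m(\zeta)|\le C\,\rho(\zeta)^{-1}$. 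A homogeneous distribution of degree $-1$ is locally integrable (as $1<Q$), hence its inverse Fourier transform $K$ is a locally integrable function, smooth away from the origin and homogeneous of degree $-(Q-1)$ for $\delta_\lambda$; continuity on the compact set $\{\rho=1\}$ then yields the pointwise bound $|K(z)|\le C\,\rho(z)^{-(Q-1)}$ on $\R^N\setminus\{0\}$.

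Finally, since $Tv=K*v$, I would conclude by the anisotropic Hardy–Littlewood–Sobolev inequality (fractional integration of order $\beta$ on the homogeneous space $(\R^N,\rho,dV)$): convolution with a kernel dominated by $\rho(\cdot)^{-(Q-\beta)}$ maps $L^p$ into $L^q$ whenever $0<\beta<Q$ and $\tfrac1q=\tfrac1p-\tfrac\beta Q$. Here $\beta=1$ and $p=2$, so $\tfrac1q=\tfrac12-\tfrac1{2N-1}=\tfrac{2N-3}{2(2N-1)}$, i.e.\ $q=\bar N$. This gives $|Tv|_{\bar N}\le S\,|v|_2$, that is $|u|_{\bar N}\le S\,\|u\|_0$ for $u\in Y$, and the density argument from the first paragraph finishes the proof.

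The step I expect to be the main obstacle is making the last two paragraphs fully rigorous: passing from "the symbol is smooth and $\delta_\lambda$-homogeneous of degree $-1$" to the sharp pointwise kernel estimate, and then the anisotropic fractional-integration inequality itself. This is exactly the content of the anisotropic embedding theorems of Besov–Il'in–Nikolskii quoted after \eqref{continuous} (see \cite{Besov}). If one prefers to avoid that general theory, an elementary alternative is to factor $m=m_1m_2$, bound the factor carrying the $x$-derivative by the one-dimensional Sobolev inequality in the $x$-variable and the remaining factor by a Riesz potential in the $y$-variables, and then combine the two estimates via Hölder's and Young's inequalities; the delicate point there is to track the exponents carefully so that they close precisely at $\bar N$.
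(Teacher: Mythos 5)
Your proposal is correct in substance, but it is worth stressing that the paper offers no proof of this lemma at all: the statement is simply imported from Xuan's Lemma~2.3, which in turn rests on the anisotropic embedding theorems of Besov--Il'in--Nikolskii cited after \eqref{continuous}. What you have written is essentially a reconstruction of how those embedding theorems are proved, so the comparison is between a citation and a genuine argument. Your route --- Plancherel on $Y$ to identify $\|u\|_0^2=\int(\xi^4+|\eta|^2)|\widehat g|^2$, reduction to an $L^2\to L^{\bar N}$ bound for the multiplier $m=i\xi(\xi^4+|\eta|^2)^{-1/2}$, the observation that $m$ is smooth away from the origin and $\delta_\lambda$-homogeneous of degree $-1$ with $Q=2N-1$, the resulting kernel bound $|K|\le C\rho^{-(Q-1)}$, and the anisotropic Hardy--Littlewood--Sobolev inequality closing exactly at $q=\bar N$ --- is the standard and correct way to obtain the sharp exponent, and the exponent bookkeeping checks out ($1/q=1/2-1/(2N-1)$ gives $q=\bar N$), as does the density/Fatou step extending the bound from $Y$ to $X_0$. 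Two remarks. First, you rightly flag the two nontrivial analytic ingredients (symbol-to-kernel estimate for anisotropically homogeneous symbols, and anisotropic fractional integration); note that for the kernel bound the mere size estimate $|m|\le C\rho^{-1}$ is not enough --- it is the smoothness of $m$ away from the origin together with exact homogeneity that makes $K$ a locally integrable function homogeneous of degree $-(Q-1)$, and you do state both hypotheses. Second, you correctly diagnose that the ``$|u|_6$'' in the statement is an artefact of the case $N=2$ (where $\bar N=6$) and that the inequality the paper actually uses later is $|u|_{\bar N}\le S\|u\|_0$; proving the $\bar N$ version, as you do, is the right call.
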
 

Finally, before concluding this section, we would like to point out that the same approach explored in \cite[Lemma 2.4]{Xuan}, or \cite[Theorem 7.3]{Willem}, gives  
\begin{equation}\label{compact2}  
X_0 \hookrightarrow L^{q}_{loc}(\R^N), \quad \mbox{for} \quad 1\leq q < \bar{N},
\end{equation}
are compact. Moreover, it is very important to say that if $\Omega \subset \R^N$ is a smooth bounded domain and $q \in [1,\bar{N}]$, there exists $C>0$ such that
\begin{equation}\label{Estimativa 2} 
|u|_{L^{q}(\Omega)} \leq C \left(\int_{\Omega} \left( |u_x|^2 +|D^{-1}_{x}\nabla_y u|^2  + |u|^2\right)  dV \right)^{\frac{1}{2}}, \quad \forall u \in X_0.
\end{equation}
The above information follows from  properties involving anisotropic Sobolev spaces, for more details see  Besov, Il'in, and  Nikolskiı \cite[Chapter 3]{Besov}.

 \section{The existence of solution for positive mass case}

Through this section we will assume \eqref{hp}-\eqref{h>0}. 
We will find solutions of equation (\ref{eq4}) as critical points of  the energy functional $I:X \longrightarrow \R$ given by
$$
I(u)=\frac{1}{2}\|u\|^2-\int_{\R^N}H(u)\, dV.
$$

\begin{lem} \label{PM1} The functional $I:X \to \mathbb{R}$ verifies the mountain pass geometry, that is, 
\begin{itemize}
\item[(i)] there are $\alpha, \rho>0$ such that
$I(u) \geq \alpha$, for $\|u\|=\rho$;	
\item[(ii)] there is $e \in X \setminus\{0\}$ such that $I(e)<0$, with $\|e \|>\rho$. 
\end{itemize}
\end{lem}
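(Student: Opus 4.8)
The plan is to verify the two standard mountain-pass conditions for the functional $I(u)=\frac12\|u\|^2-\int_{\R^N}H(u)\,dV$, exploiting hypotheses \eqref{hp}, \eqref{h'} and \eqref{h>0} together with the continuous embedding \eqref{continuous}.

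For part (i), I would first extract from \eqref{h'} and \eqref{hp} good pointwise bounds on $H$. Since $h'(0)=0$ (hence $h(0)=0$ as well, and $h(t)=o(|t|)$ near $0$), given $\var>0$ there is $\delta>0$ with $|h(t)|\leq\var|t|$ for $|t|\leq\delta$, so $|H(t)|\leq\frac{\var}{2}|t|^2$ there; and from \eqref{hp}, $h(t)=o(|t|^p)$ at infinity, so combining the two regimes and absorbing the bounded intermediate range one gets
\[
|H(t)|\leq \var|t|^2 + C_\var |t|^{p+1}, \qquad \forall\, t\in\R,
\]
for a constant $C_\var$ depending on $\var$. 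Integrating over $\R^N$ and using \eqref{continuous} — valid since $2<\bar N$ and, because $p<\bar N-1$, also $p+1<\bar N$, so $L^2(\R^N)$ and $L^{p+1}(\R^N)$ both contain $X$ continuously — yields
\[
\int_{\R^N}H(u)\,dV \leq \var\, C_1\|u\|^2 + C_\var C_2\|u\|^{p+1}.
\]
Choosing $\var$ small enough that $\var C_1\leq \tfrac14$ gives $I(u)\geq \tfrac14\|u\|^2 - C_\var C_2\|u\|^{p+1}$, and since $p+1>2$ the right-hand side is $\geq\alpha>0$ for $\|u\|=\rho$ with $\rho>0$ sufficiently small. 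This settles (i).

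For part (ii), I would use \eqref{h>0}: there is $\xi>0$ with $2H(\xi)-\xi^2>0$. The idea is to test $I$ along a suitable rescaling of a fixed function and exploit the anisotropic scaling of the three quadratic terms in $\|u\|^2$. Concretely, pick $\varphi\in Y$ (so $\varphi=g_x$ with $g\in C_0^\infty(\R^N)$) which is close, in an $L^\infty$ and support sense, to the constant $\xi$ on a large region; then for the dilation $u_\lambda(x,y):=\varphi(x/\lambda,\, y/\lambda^{?})$ chosen so that the $L^2$-type term $\int |u_\lambda|^2$ grows faster than the two gradient terms $\int|u_{\lambda,x}|^2$ and $\int |D_x^{-1}\nabla_y u_\lambda|^2$ (using that $D_x^{-1}$ and $\de_x$ carry opposite powers of $\lambda$), one arranges that
\[
I(u_\lambda)=\underbrace{\tfrac12\!\int |u_{\lambda,x}|^2+|D_x^{-1}\nabla_y u_\lambda|^2}_{=o(\lambda^{m})} \;+\; \lambda^{m}\!\int_{\R^N}\Big(\tfrac12|u_\lambda|^2\big/\lambda^m - H(u_\lambda)\Big)\frac{dV}{?}
\]
where the dominant term is a positive multiple of $\int\big(\tfrac12\xi^2 - H(\xi)\big)$ on the bulk region, which is negative by \eqref{h>0}. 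Hence $I(u_\lambda)\to-\infty$ as $\lambda\to+\infty$, and taking $e=u_\lambda$ for $\lambda$ large gives $I(e)<0$ and, enlarging $\lambda$ further if needed, $\|e\|>\rho$.

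The main obstacle I anticipate is part (ii): one must choose the scaling exponents on the $x$ and $y$ variables so that the two "gradient" contributions to $\|u_\lambda\|^2$ are negligible compared to the $L^2$ contribution while keeping $\int H(u_\lambda)$ comparable to $\int |u_\lambda|^2$ — this is delicate because of the nonlocal operator $D_x^{-1}$, which scales oppositely to $\de_x$, so the admissible scaling is constrained. A clean way to sidestep the constant-function approximation issue is instead: fix any $v\in X\setminus\{0\}$ with $\int_{\R^N}H(v)\,dV>0$ (such $v$ exists by \eqref{h>0}, e.g. a smooth bump in $Y$ whose values spend most of their mass near $\xi$, using $2H(\xi)-\xi^2>0$ hence $H(\xi)>\tfrac12\xi^2>0$), and consider the one-parameter family $t\mapsto I(tv)$; one shows $I(tv)\leq \tfrac{t^2}{2}\|v\|^2 - \big(\tfrac{t^2}{2}+o(t^2)\big)\int|v|^2$-type estimates are not enough since $H$ is only superquadratic at the single scale $\xi$ — so the genuine dilation argument above, or the scaling $v_t(x,y)=v(x/t,y/t)$ examined carefully, seems unavoidable, and getting the exponent bookkeeping right is the crux. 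Once $I(e)<0$ is established for one such $e$, the condition $\|e\|>\rho$ is arranged trivially by further scaling.
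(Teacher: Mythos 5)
Your part (i) is correct and is exactly the standard argument the paper omits: the bound $|H(t)|\le \var|t|^2+C_\var|t|^{p+1}$ from \eqref{h'} and \eqref{hp}, combined with the embeddings $X\hookrightarrow L^2(\R^N)$ and $X\hookrightarrow L^{p+1}(\R^N)$ (valid since $p+1<\bar N$), gives $I(u)\ge \tfrac14\|u\|^2-C\|u\|^{p+1}$ and hence (i) for small $\rho$.

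Part (ii), however, is left with a genuine gap: the scaling exponent you mark with a question mark is the whole point, and your displayed ``formula'' is not an actual computation. The resolution is the anisotropic dilation $w_t(x,y)=\phi(x/t,\,y/t^2)$ for a fixed $\phi\in C_0^\infty(\R^N)$ with $\int_{\R^N}\big(H(\phi)-\tfrac12\phi^2\big)\,dV>0$ (such $\phi$ exists by \eqref{h>0} via the usual plateau construction). Under this scaling $dV$ picks up $t^{2N-1}$, $\de_x$ contributes $t^{-2}$ after squaring, and $D_x^{-1}\nabla_y$ contributes $t^{+2}\cdot t^{-4}=t^{-2}$ after squaring (the factor $t$ from $D_x^{-1}$ against $t^{-2}$ from $\nabla_y$), so \emph{both} terms of $\|w_t\|_0^2$ scale as $t^{2N-3}$, while $|w_t|_2^2$ and $\int H(w_t)$ scale as $t^{2N-1}$. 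Hence
\[
I(w_t)=\frac{t^{2N-3}}{2}\|\phi\|_{0}^2-t^{2N-1}\int_{\R^N}\Big(H(\phi)-\frac{\phi^2}{2}\Big)dV \longrightarrow -\infty \quad\text{as } t\to+\infty,
\]
which is the paper's argument. Note two points your plan gets slightly wrong: the $L^2$ term of the norm must be grouped with $-\int H$ (both carry $t^{2N-1}$), which is precisely why \eqref{h>0} is stated as $2H(\xi)-\xi^2>0$ rather than $H(\xi)>0$; and the isotropic scaling $v(x/t,y/t)$ you mention at the end would \emph{not} work, since then $\int|D_x^{-1}\nabla_y v_t|^2\,dV$ scales as $t^{N}$, i.e.\ at the same rate as the zeroth-order terms, and is not negligible. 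Choosing $y\mapsto y/t^2$ is forced by the requirement that $D_x^{-1}\nabla_y$ scale like $\de_x$.
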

\begin{proof} 
The proof of $(i)$ follows by using standard arguments involving the growth condition on $h$, then it will be omitted. In order to prove $(ii)$, from \eqref{h>0}  there is $\phi \in  C^{\infty}_0(\mathbb{R}^{N})$ such that
$$
\int_{\mathbb{R}^N}\left(H(\phi)-\frac{\phi^2}{2}\right)dV>0.
$$		
For $t>0$, setting 
$$
w_t(x,y)=\phi({x}/{t},{y}/{t^2}), \qquad \hbox{for }(x,y)\in \R\times\R^{N-1},
$$
by simple calculations, we derive
$$
I_\lambda(w_t)=\frac{t^{2N-3}}{2}\|\phi\|_{0}^2-t^{2N-1}\int_{\mathbb{R}^N}\left(H(\phi)-\frac{\phi^2}{2}\right)dV \to -\infty \quad \mbox{as} \ t \to +\infty.
$$	
Therefore, $(ii)$ follows by choosing $e=w_t$ with $t$ large enough.	
\end{proof}

We set
$$
\Gamma=\{ \gamma \in C([0,1],X)\,:\,\gamma(0) =0,\  \gamma(1)=e \}
$$  	
and
$$
\sigma=\inf_{\gamma \in \Gamma}\max_{t \in [0,1]}I(\gamma(t)).
$$
Clearly, by Lemma \ref{PM1}, $\sigma\ge \alpha>0$.

Following \cite{HIT,jj}, we introduce an auxiliary functional $\tilde I\in C^1(\R\times X,\R)$ given by
\[
\tilde I(\t,u)=\frac{e^{(2N-3)\t}}{2}\|u\|^2_0+\frac{e^{(2N-1)\t}}{2}|u|_2^2-e^{(2N-1)\t}\ird H(u) \, dV.
\]
The following properties hold, for all $(\t,u)\in \R\times X$,
\begin{align*}
\tilde I(0,u)&=I(u),
\\
\tilde I (\t,u)&=I(u(e^{-\t}x,e^{-2\t}y)). 
\end{align*}
We equip a standard product norm $\|(\t,u)\|_{\R\times X}=(|\t|^2+\|u\|^2)^{1/2}$ to $\R\times X$.

By Lemma \ref{PM1}, it is easy to see that also the functional $\tilde I$ satisfies the mountain pass geometry. More precisely, the following holds
\begin{lem} \label{PM1-tilde} The functional $\tilde I:\R\times X \to \mathbb{R}$ verifies the mountain pass geometry, that is, 
\begin{itemize}
\item[(i)] there are $\alpha, \rho>0$ such that
$\tilde I(\t,u) \geq \alpha$, for $\|(\t,u)\|_{\R\times X}=\rho$;	
\item[(ii)] there is $\tilde e \in \R\times X \setminus\{0\}$ such that $\tilde I(\tilde e)<0$, with $\|\tilde e \|_{\R\times X}>\rho$. 
\end{itemize}
\end{lem}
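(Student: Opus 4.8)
The plan is to deduce the mountain pass geometry for $\tilde I$ directly from Lemma~\ref{PM1}, using the two identities $\tilde I(0,u)=I(u)$ and $\tilde I(\t,u)=I(u(e^{-\t}\cdot,e^{-2\t}\cdot))$ that were just recorded. The only genuine point is that the $\t$-variable is harmless: it neither destroys the local positivity near the origin nor prevents finding a point where the functional is negative.

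For part $(i)$, I would argue as follows. Fix the pair $(\alpha,\rho)$ produced by Lemma~\ref{PM1}$(i)$ for $I$ on $X$ (shrinking $\rho$ if necessary). Since $\|(\t,u)\|_{\R\times X}=\rho$ forces $|\t|\le\rho$ and $\|u\|\le\rho$, and since $\t\mapsto e^{(2N-3)\t}$ and $\t\mapsto e^{(2N-1)\t}$ are bounded below by positive constants on $[-\rho,\rho]$, one gets for $\|(\t,u)\|_{\R\times X}=\rho$ an estimate of the shape
\[
\tilde I(\t,u)\ \ge\ \frac{e^{-(2N-3)\rho}}{2}\|u\|_0^2+\frac{e^{-(2N-1)\rho}}{2}|u|_2^2-e^{(2N-1)\rho}\ird H(u)\,dV,
\]
and then, exactly as in the proof of Lemma~\ref{PM1}$(i)$, the growth hypotheses \eqref{hp}--\eqref{h'} let one absorb the last term: $\ird H(u)\,dV=o(\|u\|^2)$ as $\|u\|\to0$, so for $\rho$ small the right-hand side is bounded below by a positive constant $\alpha'$. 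Concretely one can also simply note that if $\|(\t,u)\|_{\R\times X}=\rho$ then, taking $\rho$ small, either $u=0$ (and $\tilde I(\t,0)=0$, which is a boundary case one excludes by noting that on the sphere one may assume $\|u\|\ge \rho/2$ after possibly enlarging the ambient space — more cleanly, one keeps the explicit lower bound above) — so the displayed inequality is the substantive argument. I will present the displayed-inequality version.

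For part $(ii)$, I would take the mountain pass endpoint $e\in X\setminus\{0\}$ from Lemma~\ref{PM1}$(ii)$, for which $I(e)<0$, and simply set $\tilde e=(0,e)\in\R\times X$. Then $\tilde I(\tilde e)=\tilde I(0,e)=I(e)<0$ by the first identity, and $\|\tilde e\|_{\R\times X}=\|e\|>\rho$. (If one wishes $\tilde e$ to come from the rescaling family, one may instead take $\tilde e=(\t_0,\phi)$ with $\phi$ as in the proof of Lemma~\ref{PM1} and $\t_0$ large, using $\tilde I(\t,\phi)=\tfrac{e^{(2N-3)\t}}{2}\|\phi\|_0^2-e^{(2N-1)\t}\ird(H(\phi)-\tfrac{\phi^2}{2})\,dV\to-\infty$ as $\t\to+\infty$; either choice works, but $(0,e)$ is the shortest.) There is essentially no obstacle here: the lemma is a direct corollary of Lemma~\ref{PM1} together with the scaling identities, and the only mild care needed is the uniform positivity of the exponential weights on the compact $\t$-interval $[-\rho,\rho]$ in part $(i)$.
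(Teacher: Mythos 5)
Your proof is correct and takes essentially the same route as the paper, whose entire argument reads: for $(ii)$ take $\tilde e=(0,e)$, and $(i)$ follows from Lemma~\ref{PM1}; your version merely makes explicit the uniform bounds on the exponential weights for $|\t|\le\rho$. The degenerate point $(\t,0)$ on the sphere that you hesitate over is a harmless imprecision of the statement itself, equally present in the paper's one-line proof; what is actually used afterwards is only that the mountain pass level $\tilde\sigma$ is bounded below by $\alpha>0$, which is unaffected.
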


\begin{proof}
For $(ii)$ it is sufficient to take $\tilde e=(0,e)$,  while for $(i)$ just follows by Lemma \ref{PM1}.
\end{proof}

In what follows, we define the mountain pass level $\tilde \sigma$ for $\tilde I$ by
 $$
\tilde \sigma=\inf_{\tilde\gamma \in \tilde \Gamma}\max_{t \in [0,1]}\tilde I(\tilde \gamma(t)),
$$
where
$$
\tilde \Gamma=\{ \tilde \gamma \in C([0,1],\mathbb{R}\times X)\,:\,\tilde \gamma(0) =(0,0),\  \tilde\gamma(1)=(0,e) \}.
$$  	
Hence,  $\tilde \sigma\ge \alpha>0$. Arguing as in \cite[Lemma 4.1]{HIT},  we derive
\begin{lem}
The mountain pass levels of $I$ and $\tilde I$ coincide, namely $\sigma=\tilde \sigma$.
\end{lem}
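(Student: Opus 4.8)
The plan is to show the two inequalities $\tilde\sigma\le\sigma$ and $\sigma\le\tilde\sigma$ separately, exploiting the scaling relation $\tilde I(\t,u)=I(u(e^{-\t}x,e^{-2\t}y))$.

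First I would prove $\tilde\sigma\le\sigma$. This direction is the easy one: given any path $\gamma\in\Gamma$, the path $\tilde\gamma(t)=(0,\gamma(t))$ lies in $\tilde\Gamma$, since $\tilde\gamma(0)=(0,0)$ and $\tilde\gamma(1)=(0,e)$, and along it $\tilde I(0,\gamma(t))=I(\gamma(t))$; hence $\max_{t}\tilde I(\tilde\gamma(t))=\max_t I(\gamma(t))$, and taking the infimum over $\gamma\in\Gamma$ gives $\tilde\sigma\le\sigma$.

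For the reverse inequality $\sigma\le\tilde\sigma$, I would start from an arbitrary path $\tilde\gamma\in\tilde\Gamma$, write $\tilde\gamma(t)=(\t(t),\eta(t))$ with $\t(0)=\t(1)=0$ and $\eta(0)=0$, $\eta(1)=e$. The key point is that the map $u\mapsto u(e^{-\t}\cdot,e^{-2\t}\cdot)$ is a continuous bijection of $X$ (with continuous inverse), so defining $\gamma(t)(x,y):=\eta(t)(e^{-\t(t)}x,e^{-2\t(t)}y)$ produces a continuous curve in $X$; moreover $\gamma(0)=0$ (since $\eta(0)=0$) and $\gamma(1)=e$ (since $\t(1)=0$), so $\gamma\in\Gamma$. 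By the scaling identity, $I(\gamma(t))=\tilde I(\t(t),\eta(t))=\tilde I(\tilde\gamma(t))$, whence $\max_t I(\gamma(t))=\max_t\tilde I(\tilde\gamma(t))$; taking the infimum over $\tilde\gamma\in\tilde\Gamma$ yields $\sigma\le\tilde\sigma$. Combining the two inequalities gives $\sigma=\tilde\sigma$.

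The main obstacle, and the only place needing genuine care, is the continuity of $t\mapsto\gamma(t)$ in the $X$-norm: one must check that $(\t,u)\mapsto u(e^{-\t}\cdot,e^{-2\t}\cdot)$ is continuous from $\R\times X$ to $X$. For $u\in Y$ this is a direct computation with the change of variables $x'=e^{-\t}x$, $y'=e^{-2\t}y$, which shows each of the three terms in $\|\cdot\|^2$ transforms by an explicit power of $e^{\t}$ (namely $e^{(2N-3)\t}$, $e^{(2N-3)\t}$, and $e^{(2N-1)\t}$ for the $u_x$, $D_x^{-1}\nabla_y u$, and $u$ parts respectively), and jointly continuous dependence on $(\t,u)$ follows; the general case $u\in X$ is obtained by density, using that $Y$ is dense in $X$ by construction and that the scaling operator is bounded on $X$ with norm controlled locally uniformly in $\t$. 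Once this continuity is in hand, the argument above is purely formal, exactly as in \cite[Lemma 4.1]{HIT}.
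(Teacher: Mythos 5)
Your proposal is correct and follows exactly the route the paper intends: the paper gives no details, simply invoking \cite[Lemma 4.1]{HIT}, and the argument there is precisely your two-inequality scheme (embed $\gamma\mapsto(0,\gamma)$ for $\tilde\sigma\le\sigma$, unscale $\tilde\gamma=(\t(\cdot),\eta(\cdot))$ via $\gamma(t)=\eta(t)(e^{-\t(t)}\cdot,e^{-2\t(t)}\cdot)$ for $\sigma\le\tilde\sigma$). Your verification of the anisotropic scaling exponents $e^{(2N-3)\t}$, $e^{(2N-3)\t}$, $e^{(2N-1)\t}$ and of the continuity of the rescaling map is the right adaptation to the $X$-setting and is consistent with the formula for $\tilde I$ given in the paper.
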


Now, as a immediate consequence of Ekeland's variational principle, we have the result below whose proof follows as in \cite[Lemma 2.3]{jeanjean}.
\begin{lem}\label{le:ekeland}
Let $\eps>0$. Suppose that $\tilde \gamma \in \tilde{\Gamma}$ satisfies 
\[
\max_{t \in [0,1]}\tilde I(\tilde \gamma(t))\le \sigma+\eps,
\]
then there exists $(\t, u)\in \R\times X$ such that
\begin{enumerate}
\item ${\rm dist}_{\R \times X}\big((\t,u),\tilde{\gamma}([0,1])\big)\le 2 \sqrt{\eps}$;
\item $\tilde{I}(\t,u)\in [\sigma-\eps,\sigma+\eps]$;
\item $\|D\tilde{I}(\t,u)\|_{\R \times X^*}\le 2 \sqrt{\eps}$.
\end{enumerate}
\end{lem}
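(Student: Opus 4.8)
\noindent The plan is to run Ekeland's variational principle directly on the space of admissible paths, following the scheme of \cite[Lemma~2.3]{jeanjean} (cf.\ \cite{HIT}); the argument only uses that $\tilde I\in C^{1}(\R\times X,\R)$ has the mountain pass geometry of Lemma~\ref{PM1-tilde} and that $\sigma=\tilde\sigma$. First I would endow $M:=\tilde\Gamma$ with the uniform metric $d(\gamma_{1},\gamma_{2})=\max_{t\in[0,1]}\|\gamma_{1}(t)-\gamma_{2}(t)\|_{\R\times X}$. Since $\tilde\Gamma$ is a closed subset of the complete metric space $C([0,1],\R\times X)$ --- the endpoint conditions $\gamma(0)=(0,0)$ and $\gamma(1)=(0,e)$ are preserved by uniform limits --- the metric space $(M,d)$ is complete. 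On $M$ I would consider the functional $\Psi(\gamma):=\max_{t\in[0,1]}\tilde I(\gamma(t))$, which is continuous and, by the definition of $\tilde\sigma$ together with $\sigma=\tilde\sigma$, satisfies $\inf_{M}\Psi=\sigma$. The hypothesis of the lemma is then precisely $\Psi(\tilde\gamma)\le\inf_{M}\Psi+\eps$.

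Next I would apply Ekeland's variational principle to $(M,d,\Psi)$ with perturbation parameter $\sqrt\eps$: this yields $\gamma_{0}\in M$ with $\Psi(\gamma_{0})\le\Psi(\tilde\gamma)$, $d(\gamma_{0},\tilde\gamma)\le\sqrt\eps$, and $\Psi(\gamma)>\Psi(\gamma_{0})-\sqrt\eps\,d(\gamma,\gamma_{0})$ for every $\gamma\in M\setminus\{\gamma_{0}\}$. Since $\sigma=\inf_{M}\Psi\le\Psi(\gamma_{0})\le\Psi(\tilde\gamma)\le\sigma+\eps$, item~2.\ will be satisfied the moment $(\t,u)$ is chosen on the peak set
\[
\mathcal B:=\bigl\{\,\gamma_{0}(t)\ :\ t\in[0,1],\ \tilde I(\gamma_{0}(t))=\Psi(\gamma_{0})\,\bigr\}\subset\R\times X,
\]
which is nonempty and compact and which stays uniformly away from the endpoints $\gamma_{0}(0),\gamma_{0}(1)$, because $\tilde I(\gamma_{0}(0))=\tilde I(0,0)=0$ and $\tilde I(\gamma_{0}(1))=I(e)<0$, whereas $\Psi(\gamma_{0})\ge\sigma\ge\alpha>0$. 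Moreover, item~1.\ holds for any $(\t,u)=\gamma_{0}(t_{0})\in\mathcal B$, since ${\rm dist}_{\R\times X}\bigl((\t,u),\tilde\gamma([0,1])\bigr)\le\|\gamma_{0}(t_{0})-\tilde\gamma(t_{0})\|\le d(\gamma_{0},\tilde\gamma)\le\sqrt\eps\le 2\sqrt\eps$. Hence the whole statement reduces to finding a point of $\mathcal B$ at which $\|D\tilde I\|_{\R\times X^{*}}\le 2\sqrt\eps$.

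For this final step I would argue by contradiction through a localized quantitative deformation. Assume $\|D\tilde I(z)\|_{\R\times X^{*}}>2\sqrt\eps$ for every $z\in\mathcal B$; by compactness of $\mathcal B$ and continuity of $D\tilde I$ this inequality persists, with a strictly positive gap, on an open neighborhood $\mathcal N$ of $\mathcal B$ that avoids $\gamma_{0}(0)$ and $\gamma_{0}(1)$. On $\mathcal N$ I would take a locally Lipschitz, normalized pseudo-gradient vector field $W$ for $\tilde I$ --- with $\|W\|\le1$ and $\langle D\tilde I,W\rangle\ge\tfrac12\|D\tilde I\|$, which is legitimate since $\|D\tilde I\|$ is bounded away from $0$ on $\mathcal N$ --- multiply it by a locally Lipschitz cutoff equal to $1$ near $\mathcal B$ and vanishing outside $\mathcal N$ and on $\{\tilde I\le\Psi(\gamma_{0})-\delta\}$ for a small $\delta>0$, and let $\eta(s,\cdot)$, $s\ge0$, be the (globally defined) flow of the resulting field. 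The deformed paths $\gamma_{s}:=\eta(s,\gamma_{0}(\cdot))$ then lie in $M$ (the cutoff vanishes at the endpoints, so $\gamma_{s}(0)=(0,0)$ and $\gamma_{s}(1)=(0,e)$), they satisfy $d(\gamma_{s},\gamma_{0})\le s$, and the standard estimate --- $\tilde I$ never increases along the flow, while at the parameters $t$ for which $\gamma_{0}(t)$ is close to $\mathcal B$ it strictly decreases at rate $\ge\tfrac12\inf_{\mathcal B}\|D\tilde I\|>\sqrt\eps$, the remaining parameters keeping $\tilde I$-values below $\Psi(\gamma_{0})$ for $s$ small --- gives $\Psi(\gamma_{s})\le\Psi(\gamma_{0})-cs$ with $c>\sqrt\eps$ for $s$ small enough. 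Therefore $\Psi(\gamma_{s})<\Psi(\gamma_{0})-\sqrt\eps\,s\le\Psi(\gamma_{0})-\sqrt\eps\,d(\gamma_{s},\gamma_{0})$ with $\gamma_{s}\ne\gamma_{0}$, contradicting the Ekeland inequality. This produces the required $(\t,u)\in\mathcal B$ and completes the proof.

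I expect the only genuinely delicate ingredient to be this deformation: the pseudo-gradient flow must be localized so that it simultaneously lowers $\tilde I$ at a definite rate on the peak set, does not raise $\max_{t}\tilde I$ above $\Psi(\gamma_{0})-cs$ at the other parameters, and leaves the endpoints $(0,0)$ and $(0,e)$ fixed, so that each $\gamma_{s}$ remains a legitimate competitor in $\tilde\Gamma$; the remaining ingredients (completeness of $(M,d)$, continuity of $\Psi$, the application of Ekeland's principle, and the bookkeeping of constants) are routine and follow \cite[Lemma~2.3]{jeanjean}.
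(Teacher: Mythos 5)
Your proof is correct and follows essentially the same route the paper takes: the paper omits the argument entirely, deferring to \cite[Lemma 2.3]{jeanjean}, and your write-up is precisely that scheme (Ekeland's principle on the complete path space $(\tilde\Gamma,d)$ with parameter $\sqrt{\eps}$, followed by the localized pseudo-gradient deformation of the peak set to contradict the Ekeland inequality). The details you supply --- completeness of $\tilde\Gamma$, the peak set avoiding the endpoints since $\tilde I(0,0)=0$ and $\tilde I(0,e)<0$ while $\Psi(\gamma_0)\ge\alpha>0$, and the cutoff vanishing on $\{\tilde I\le\Psi(\gamma_0)-\delta\}$ so the deformed paths remain admissible --- are exactly the ones the cited reference uses.
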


Arguing as in \cite{HIT}, by Lemma \ref{le:ekeland}, the following proposition holds
\begin{prop}\label{pr:sequence}
There exists a sequence $\{(\t_n,u_n)\} \subset \R \times X$ such that, as $n \to +\infty$, we get 
\begin{enumerate}
\item $\t_n \to 0$;
\item $\tilde{I}(\t_n,u_n)\to \sigma$; 
\item $\de_\t\tilde{I}(\t_n,u_n)\to 0$; 
\item $\de_u\tilde{I}(\t_n,u_n)\to 0$ strongly in $X^*$. 
\end{enumerate}

\end{prop}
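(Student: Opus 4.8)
The plan is to follow the scheme of Hirata, Ikoma and Tanaka \cite{HIT} (see also Jeanjean \cite{jeanjean}), constructing the desired Palais--Smale sequence from the almost-critical point provided by Lemma \ref{le:ekeland}. First I would fix a minimizing sequence $\{\tilde\gamma_n\} \subset \tilde\Gamma$ for $\tilde\sigma$, say with $\max_{t\in[0,1]}\tilde I(\tilde\gamma_n(t)) \le \tilde\sigma + \eps_n$ where $\eps_n \to 0^+$. Since $\sigma = \tilde\sigma$ by the preceding lemma, we may apply Lemma \ref{le:ekeland} with $\eps = \eps_n$ to each $\tilde\gamma_n$, obtaining $(\t_n, u_n) \in \R\times X$ satisfying the three conclusions there: $\mathrm{dist}_{\R\times X}\big((\t_n,u_n),\tilde\gamma_n([0,1])\big) \le 2\sqrt{\eps_n}$, $\tilde I(\t_n,u_n) \in [\sigma-\eps_n,\sigma+\eps_n]$, and $\|D\tilde I(\t_n,u_n)\|_{\R\times X^*} \le 2\sqrt{\eps_n}$.

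The second conclusion immediately gives item (2), $\tilde I(\t_n,u_n)\to\sigma$, and the third gives items (3) and (4) at once, since $D\tilde I(\t_n,u_n) = \big(\de_\t\tilde I(\t_n,u_n), \de_u\tilde I(\t_n,u_n)\big)$ and the product norm controls each component; so $\de_\t\tilde I(\t_n,u_n)\to 0$ in $\R$ and $\de_u\tilde I(\t_n,u_n)\to 0$ strongly in $X^*$. The one remaining point is item (1), $\t_n\to 0$. Here I would exploit that the paths $\tilde\gamma_n$ all have endpoints at $(0,0)$ and $(0,e)$, i.e.\ their first ($\t$-)component vanishes at $t=0$ and $t=1$. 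To get $\t_n\to 0$ one needs the minimizing sequence $\{\tilde\gamma_n\}$ to be chosen so that its $\t$-component stays uniformly small along the whole path; this is exactly the content of \cite[Lemma 4.1]{HIT}, which shows that in computing $\tilde\sigma$ one may restrict to paths lying in $\{0\}\times X$ (or arbitrarily close to it), because of the relation $\tilde I(\t,u) = I(u(e^{-\t}x,e^{-2\t}y))$ and the invariance $\tilde\sigma = \sigma$. Concretely, given any $\tilde\gamma_n\in\tilde\Gamma$ one can replace it by $t\mapsto(0,\gamma_n(t))$ with $\gamma_n\in\Gamma$ without increasing the max-value by more than $\eps_n$; then the distance estimate forces $|\t_n| \le 2\sqrt{\eps_n}\to 0$.

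The main obstacle is precisely this last step: ensuring the minimizing paths for $\tilde\sigma$ can be taken with $\t$-component controlled (ideally identically zero), so that the distance bound in Lemma \ref{le:ekeland} yields $\t_n\to0$. This is where the identity $\sigma=\tilde\sigma$ together with the explicit dilation structure of $\tilde I$ is essential: any path $\tilde\gamma$ in $\tilde\Gamma$ can be projected/deformed onto $\{0\}\times X$ at no cost to the mountain-pass value, so for each $n$ we choose $\tilde\gamma_n$ of the special form $(0,\gamma_n)$, whence every point of $\tilde\gamma_n([0,1])$ has vanishing first coordinate and the distance estimate gives $|\t_n|\le \mathrm{dist}_{\R\times X}\big((\t_n,u_n),\tilde\gamma_n([0,1])\big)\le 2\sqrt{\eps_n}$. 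Assembling these observations over the sequence $\eps_n\to0^+$ produces the sequence $\{(\t_n,u_n)\}$ with all four properties, completing the proof; the verification of conclusions (2)–(4) is routine once Lemma \ref{le:ekeland} is in hand.
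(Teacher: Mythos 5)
Your proposal is correct and is essentially the argument the paper intends: the paper gives no details, simply invoking Lemma \ref{le:ekeland} and the scheme of \cite{HIT}, and your reconstruction — applying the Ekeland lemma to nearly optimal paths of the special form $t\mapsto(0,\gamma_n(t))$ (admissible since $\tilde I(0,\cdot)=I(\cdot)$ and $\sigma=\tilde\sigma$), so that the distance estimate forces $|\t_n|\le 2\sqrt{\eps_n}\to 0$ while the other two conclusions give items (2)--(4) — is exactly that scheme.
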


After the above study, we are ready to prove Theorem \ref{T1}.

\begin{proof}[Proof of Theorem \ref{T1}]
By Proposition \ref{pr:sequence}, there exists a sequence $\{(\t_n,u_n)\} \subset \R \times X$ such that
\begin{equation}\label{sistema}
\begin{cases}
\dis\frac{e^{(2N-3)\t_n}}{2}\|u_n\|^2_0+\frac{e^{(2N-1)\t_n}}{2}|u_n|_2^2-e^{(2N-1)\t_n}\ird H(u_n) \, dV=\sigma+o_n(1),
\\[4mm]
\dis\frac{e^{(2N-3)\t_n}}{2}\|u_n\|^2_0+\frac{(2N-1)e^{(2N-1)\t_n}}{2}|u_n|_2^2-(2N-1)e^{(2N-1)\t_n}\ird H(u_n) \, dV=o_n(1),
\\[4mm]
\dis e^{(2N-3)\t_n}\|u_n\|^2_0+e^{(2N-1)\t_n}|u_n|_2^2-e^{(2N-1)\t_n}\ird h(u_n)u_n \, dV=o_n(1)\|u_n\|.
\end{cases}
\end{equation}
From the first and the second equation of the previous system we get
\[
(N-1)e^{\t_n}\|u_n\|^2_0=(2N-1)\sigma+o_n(1),
\]
and so, since $\t_n \to 0$, we infer that $\{u_n\}$ is bounded in $X_0$ and so also in $L^{\bar{N}}(\RD)$, by Lemma \ref{le:l6}. Observe that, by \eqref{hp} and \eqref{h'}, we deduce that for any $\delta>0$, there exists $C_\delta>0$ such that
\begin{equation*}
|h(t)|\le \delta |t|+C_\delta |t|^{\bar{N}-1}, \qquad\hbox{for all }t\in \R.
\end{equation*}
Hence, by the third equation of \eqref{sistema}, using again the fact that  $\t_n \to 0$, we find
\[
\|u_n\|^2\le C e^{(2N-1)\t_n}\ird h(u_n)u_n \, dV+o_n(1)\|u_n\|
\le C\left( \delta|u_n|^2_2 +C_\delta |u_n|^{\bar{N}}_{\bar{N}}\right)+o_n(1)\|u_n\|.
\]
Then for $\delta$ small enough, and using the fact that $\{|u_n|_{\bar{N}}\}$ is bounded, it follows that
$$
\|u_n\|^2\le C, \quad \forall n \in \mathbb{N},
$$
for some $C>0$, showing that $\{u_n\}$ is actually bounded in $X$. Moreover, by the continuous embedding \eqref{continuous}, we also have for all $p<\bar{N}-1,$
\[
|u_n|_{p+1}^{2}\le C \|u_n\|^{2}\le C|u_n|^{p+1}_{p+1}, \quad \forall n \in \mathbb{N},
\]
and so there exists $c>0$ such that $|u_n|_{p+1}\ge c>0$ for all $n \in \mathbb{N}$. Hence, by Lemma \ref{Lions}, there exist a sequence of points $\{(x_n,y_n)\}\subset\RD$ and $r,\beta>0$ such that 
$$
\int_{B_r((x_n,y_n))}|u_n|^{2}\,dV \ge \beta> 0.
$$
Hence, calling $v_n=u(\cdot-x_n,\cdot-y_n)$, being $\{v_n\}$ a bounded sequence in $X$, up to a subsequence, we must have 
\[
v_n \rightharpoonup v\neq 0 \qquad \hbox{weakly in }X.
\]
By the invariance by translations of $\tilde{I}$, we have that $\de_u\tilde{I}(\t_n,v_n)\to 0$ strongly in $X^*$, and so, since $\t_n \to 0$ and by the local compact embedding \eqref{compact}, we conclude that $I'(v)=0$, thus $v$ is a non-trivial solution of \eqref{eq4}.

\end{proof}

 \section{The existence of solution for the zero mass case}

Through this section we will assume \eqref{h>0}-\eqref{hf5}. 
We start with a technical lemma, which will be used later on. 
\begin{lem}\label{le:supK} 
Let $\{w_n\} \subset X_0$ be a bounded sequence in $X_0$ with  
$$
\lim_{n \to +\infty}\sup_{(x,y)\in \mathbb{R}^N} \int_{K(x,y)}|f(w_n)w_n|\, dV=0,
$$	
where $K(x,y)=(x-1,x+1) \times B_1(y)$. Then $\displaystyle \lim_{n \to +\infty}\int_{\mathbb{R}^N}f(w_n)w_n\, dV=0$. 
\end{lem}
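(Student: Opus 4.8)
The plan is to exploit a covering-of-$\R^N$ argument combined with the growth hypothesis \eqref{hf5} on $f$, which lets us bound $|f(w_n)w_n|$ by $\eps$ times a combination of $|w_n|^2$ and $|w_n|^{\bar N}$, together with the local Sobolev inequality \eqref{Estimativa 2} on the unit cells $K(x,y)$. First I would cover $\R^N$ by a countable family of translates $K(x_i,y_i)$, say with the $(x_i,y_i)$ ranging over a suitable lattice, in such a way that each point of $\R^N$ belongs to at most a fixed finite number $\ell=\ell(N)$ of the cells; this is the standard "finite overlap" covering. On each cell, by \eqref{hf5} and $f(0)=0$ (so $F,f$ vanish appropriately at $0$ and $\infty$), for every $\eps>0$ there is $C_\eps>0$ with
\[
|f(t)t|\le \eps\bigl(|t|^2+|t|^{\bar N}\bigr)+C_\eps\,\chi(t),
\]
but actually the cleaner route is: $|f(t)t|\le \eps |t|^{\bar N}+ C_\eps |t|^{\bar N}\mathbf 1_{\{a\le |t|\le b\}}$ is not quite what we want either — instead I will use that $|f(t)t|\le \eps|t|^{\bar N}$ for $|t|$ small or large, and $|f(t)t|\le M$ on the compact intermediate range, and interpolate using the $L^2$-smallness that the hypothesis of the lemma provides via $f(w_n)w_n$. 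The most transparent version: split $|f(t)t|\le \eps|t|^{\bar N}+ C_\eps |t|^{q}$ for some $q<\bar N$, but this reintroduces a non-$L^2$ term, so the key observation is that the hypothesis already controls $\sup_{(x,y)}\int_{K(x,y)}|f(w_n)w_n|$ directly, and I only need to upgrade this local smallness to global smallness.

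The main step is therefore the following: on each cell $K_i:=K(x_i,y_i)$, write
\[
\int_{K_i}|f(w_n)w_n|\,dV\le \eps \int_{K_i}|w_n|^{\bar N}\,dV + \int_{K_i\cap\{|w_n|\text{ in compact range}\}}|f(w_n)w_n|\,dV,
\]
and on the second piece, since $|f(w_n)w_n|\le C$ there, bound it by $C\bigl(\int_{K_i}|f(w_n)w_n|\,dV\bigr)^{1-1/\bar N}$-type estimate — more simply, bound $\int_{K_i}|f(w_n)w_n|\,dV\le \eps\int_{K_i}|w_n|^{\bar N}dV + C_\eps\int_{K_i}|w_n|^2\,dV$ using \eqref{hf5} (small $t$ gives the $|t|^2$ term after Young, large $t$ gives the $\eps|t|^{\bar N}$ term), and then apply the local inequality \eqref{Estimativa 2} with $q=2$ and $q=\bar N$ over the cube-like domain $K_i$ to get $\int_{K_i}|w_n|^2\,dV\le C\|w_n\|^2_{X_0,K_i}$... but $\|\cdot\|_0$ has no zeroth-order term, so here I would instead use that on the bounded domain $K_i$ the embedding $X_0\hookrightarrow L^q(K_i)$ holds for $q\le\bar N$ with a constant uniform in $i$ (translation invariance of $K_i$), hence $\int_{K_i}|w_n|^2+|w_n|^{\bar N}\le C\bigl(\int_{K_i}|\de_x w_n|^2+|D_x^{-1}\nabla_y w_n|^2\bigr) + C\bigl(\int_{K_i}\cdots\bigr)^{\bar N/2}$. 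Summing over $i$ with the finite-overlap property, $\sum_i \int_{K_i}(|\de_x w_n|^2+|D_x^{-1}\nabla_y w_n|^2)\le \ell\,\|w_n\|_0^2\le C$ since $\{w_n\}$ is bounded in $X_0$; for the $\bar N/2$-power term we use that $\bar N/2>1$ together with the $L^2$-local-smallness hypothesis to make it $o_n(1)$. Thus $\sum_i\int_{K_i}|f(w_n)w_n|\,dV\le C\eps + o_n(1)$, and since the $K_i$ cover $\R^N$ we conclude $\int_{\R^N}|f(w_n)w_n|\,dV\le C\eps+o_n(1)$, whence the claim follows by letting $n\to\infty$ and then $\eps\to0$.

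The delicate point — the one I expect to be the real obstacle — is handling the intermediate range of $|t|$ (where $f(t)t$ is bounded but need not be small relative to $|t|^2$ or $|t|^{\bar N}$): one must convert the raw $L^1$-local-smallness in the hypothesis into $L^2$-local-smallness (or directly into smallness of the sum over cells), and this requires an interpolation of the form $\int_{K_i}|f(w_n)w_n|\le (\text{sup bound})^{\alpha}\cdot(\text{sup of }L^1\text{ on cells})^{1-\alpha}$ carefully chosen so the exponents balance, or equivalently an observation that $|f(t)t|\le\eps|t|^{\bar N}+C_\eps|f(t)t|^{2/\bar N}\cdot|f(t)t|^{1-2/\bar N}$ is not circular because on the intermediate range $|f(t)t|^{1-2/\bar N}\le C$. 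I would structure the estimate so that the globally-bounded-in-$X_0$ quantity carries the $\eps|t|^{\bar N}$ part (summable with finite overlap) while the hypothesis carries the rest, and I would double-check the numerology $\bar N=\tfrac{2(2N-1)}{2N-3}>2$ makes the relevant powers work in our favor.
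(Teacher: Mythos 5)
Your toolbox is the right one (a finite-overlap covering of $\RD$ by the cells $K(x_i,y_i)$, the growth of $f$, the local anisotropic Sobolev inequality, and an interpolation against the sup in the hypothesis), and your closing paragraph correctly identifies the interpolation that actually drives the paper's proof. But the argument you assemble in the body does not close, and the gap sits exactly where you flag ``the real obstacle''. Concretely: the splitting $|f(t)t|\le \eps|t|^{\bar{N}}+C_\eps|t|^2$ leaves you, after summing over the cells, with a term $C_\eps\sum_i\int_{K_i}|w_n|^2\simeq C_\eps|w_n|_2^2$, and a bounded sequence in $X_0$ carries no global $L^2$ bound --- that is the whole point of the zero-mass case. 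Your attempted repair, the local inequality $\int_{K_i}|w_n|^2\le C\int_{K_i}(|\de_x w_n|^2+|D_x^{-1}\nabla_y w_n|^2)$ with constant uniform in $i$, is a local Poincar\'e-type inequality that is not available: the correct local embedding \eqref{Estimativa 2} carries $\int_{K_i}|w_n|^2$ on its right-hand side, so it cannot be used to eliminate that term. Moreover, the chain of estimates you display never uses the actual hypothesis of the lemma: you appeal to an ``$L^2$-local-smallness hypothesis'', but what is assumed is local $L^1$-smallness of $f(w_n)w_n$ itself, and in your scheme it enters nowhere.

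The fix is the one the paper uses, and it exploits a feature of \eqref{hf5} that the $\eps$--$C_\eps$ splitting discards: since $f(t)/|t|^{\bar{N}-1}\to 0$ both as $t\to 0$ and as $|t|\to\infty$, one has the single-power bound $|f(t)t|\le C|t|^{\bar{N}}$ for \emph{all} $t$, with no lower-order term at all. Then \eqref{Estimativa 2} gives $A_K:=\int_K|f(w_n)w_n|\,dV\le C_1 B_K^{\bar{N}/2}$, where $B_K$ is the local norm squared, and the interpolation is simply $A_K=A_K^{2/\bar{N}}A_K^{1-2/\bar{N}}\le C_1^{2/\bar{N}}B_K\big(\sup_{(x,y)}A_{K(x,y)}\big)^{1-2/\bar{N}}$. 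Summing over the covering, the factor $\sum_i B_{K_i}$ is controlled by $\|w_n\|_0^2$ (the zeroth-order part of $B_{K_i}$ being handled via $|w_n|_{L^2(K_i)}\le C_*|w_n|_{L^{\bar{N}}(K_i)}$ and the global $L^{\bar N}$ bound from Lemma \ref{le:l6}), while the sup factor tends to zero by hypothesis; no $\eps$ is needed anywhere. So the observation in your last paragraph is not a supplementary ``delicate point'' --- it must replace the $\eps|t|^{\bar{N}}+C_\eps|t|^2$ decomposition entirely.
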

\begin{proof} By \eqref{hf5}, there is $C>0$ such that
$$
|f(t)t| \leq C|t|^{\bar{N}}, \quad \forall t \in \mathbb{R}. 
$$ 
The above inequality combines with (\ref{Estimativa 2})  to give  
$$
\int_{K(x,y)}\!\!|f(w_n)w_n|\, dV \leq C\int_{K(x,y)}\!\!|w_n|^{\bar{N}}\, dV \leq C_1\left[\int_{K(x,y)}\!\!\left(|(w_n)_x|^2+|D_x^{-1} \nabla_y w_n|^{2}+|w_n|^2 \right)\, dV\right]^{\bar{N}/2}.
$$
Thus, for all $\lambda \in (0,1)$,
$$
\int_{K(x,y)}\!\!\!|f(w_n)w_n| dV \leq C_1^{\lambda}\left[\int_{K(x,y)}\!\!\!\left(|(w_n)_x|^2\!+|D_x^{-1}\nabla_y  w_n|^{2}\!+|w_n|^2\right)dV\right]^{\lambda\bar{N}/2} \!\left(\int_{K(x,y)}\!\!\!|f(w_n)w_n|dV\right)^{1-\lambda}.
$$
Setting $\lambda=2/\bar{N}$, we get
$$
\int_{K(x,y)}\!\!\!|f(w_n)w_n| dV \leq C_1^{2/\bar{N}}\left[\int_{K(x,y)}\!\!\!\left(|(w_n)_x|^2\!+|D_x^{-1}\nabla_y w_n|^{2}\!+|w_n|^2\right) dV\right] \!\left(\int_{K(x,y)}\!\!\!|f(w_n)w_n| dV\right)^{1- 2/\bar{N}}.
$$
By using the fact that 
$$
|w_n|_{L^{2}(K(x,y))} \leq C_*|w_n|_{L^{\bar{N}}(K(x,y))}, \quad \forall n \in \mathbb{N},
$$ 
for some constant $C_*>0$ independent of $(x,y) \in \R^N$, we get 
$$
\int_{\mathbb{R}^N}|f(w_n)w_n|\, dV \leq C_1^{2/\bar{N}}\|w_n\|_0^2 \left(\sup_{(x,y)\in \mathbb{R}^N}\int_{K(x,y)}|f(w_n)w_n|\, dV\right)^{1-2/\bar{N}}
$$
and so,
$$
\int_{\mathbb{R}^N}|f(w_n)w_n|\, dV \leq C_2\left(\sup_{(x,y)\in \mathbb{R}^N}\int_{K(x,y)}|f(w_n)w_n|\, dV \right)^{1- 2/\bar{N}},
$$
for all $n \in \mathbb{N}$ and for some $C_2>0$. From where it follows that
$$
\lim_{n\to +\infty}\int_{\mathbb{R}^N}|f(w_n)w_n|\, dV =0
$$
and so the claim.
\end{proof}

Associated with equation (\ref{eq4'}), by \eqref{hf}, we have the energy functional $I_0:X_0 \longrightarrow \R$ given by
$$
I_0(u)=\frac{1}{2}\|u\|_0^2-\int_{\R^N}F(u)\, dV.
$$

\begin{lem} \label{PM10} The functional $I_0:X_0 \to \mathbb{R}$ verifies the mountain pass geometry, that is, 
\begin{itemize}
\item[(i)] there are $\alpha, \rho>0$ such that
$I_0(u) \geq \alpha$, for $\|u\|_0=\rho$;	
\item[(ii)] there is $e \in X_0 \setminus\{0\}$ such that $I_0(e)<0$, with $\|e \|_0>\rho$. 
\end{itemize}
\end{lem}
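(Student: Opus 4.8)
The plan is to mimic the proof of Lemma \ref{PM1}, since $I_0$ has essentially the same structure as $I$ with $\|\cdot\|$ replaced by $\|\cdot\|_0$ and $H$ replaced by $F$; the only genuine difference is that we no longer have an $L^2$ term at our disposal, so all estimates must be carried out using only the $X_0$-norm and the embedding $X_0 \hookrightarrow L^{\bar N}(\R^N)$ from Lemma \ref{le:l6} (recall $\bar N = 6$ in the statement of that lemma, but the argument uses the general exponent).

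\textbf{Part (i).} First I would record that, by \eqref{hf5}, for every $\delta>0$ there is $C_\delta>0$ with $|f(t)|\le \delta|t|^{\bar N-1}+C_\delta|t|^{\bar N-1}$... more precisely, since $f(t)/|t|^{\bar N-1}\to 0$ both at $0$ and at $\infty$, for every $\delta>0$ we have $|f(t)|\le \delta|t|^{\bar N-1}$ for all $t$, hence $|F(t)|\le \frac{\delta}{\bar N}|t|^{\bar N}$ for all $t\in\R$. Therefore
\[
I_0(u)\ge \frac12\|u\|_0^2-\frac{\delta}{\bar N}\int_{\R^N}|u|^{\bar N}\,dV \ge \frac12\|u\|_0^2-C\delta\,\|u\|_0^{\bar N},
\]
using Lemma \ref{le:l6}. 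Since $\bar N>2$, choosing $\rho>0$ small and then the corresponding $\delta$ (or simply fixing $\delta$ and taking $\rho$ small) yields $I_0(u)\ge \alpha>0$ on $\|u\|_0=\rho$. Note that here, unlike in the positive-mass case, no smallness of $\delta$ is even needed because the lower-order term is superquadratic; one only needs $\rho$ small.

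\textbf{Part (ii).} For the descent direction I would reuse the scaling trick from Lemma \ref{PM1}. By \eqref{f>0} there is $\xi>0$ with $F(\xi)>0$, so one can pick $\phi\in C_0^\infty(\R^N)$ with $\int_{\R^N}F(\phi)\,dV>0$. Setting $w_t(x,y)=\phi(x/t,y/t^2)$ and computing the change of variables (exactly as done for $I_\lambda$ in Lemma \ref{PM1}, where $dV$ scales by $t^{2N-1}$, $\|\cdot\|_0^2$ scales by $t^{2N-3}$), we get
\[
I_0(w_t)=\frac{t^{2N-3}}{2}\|\phi\|_0^2-t^{2N-1}\int_{\R^N}F(\phi)\,dV\longrightarrow -\infty\qquad\text{as }t\to+\infty.
\]
Since $2N-1>2N-3$ the negative term dominates, so for $t$ large enough $e:=w_t$ satisfies $I_0(e)<0$, and by enlarging $t$ further we also ensure $\|e\|_0>\rho$. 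This completes (ii).

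\textbf{Main obstacle.} There is essentially no hard obstacle here — the lemma is routine — but the one point requiring a little care is justifying that $w_t\in X_0$ and that the scaling computation is valid at the level of the $X_0$-norm: one should check that $\phi\in C_0^\infty$ gives $\phi\in X_0$ (writing $\phi=(D_x^{-1}\phi)_x$ with $D_x^{-1}\phi$ smooth and compactly supported in the $y$-directions, so $\phi\in Y$), and that the term $\int |D_x^{-1}\nabla_y w_t|^2\,dV$ scales with the same power $t^{2N-3}$ as $\int|(w_t)_x|^2\,dV$, which is the reason the two pieces of $\|\cdot\|_0^2$ combine cleanly. Once this bookkeeping is in place, the proof is identical in spirit to that of Lemma \ref{PM1}.
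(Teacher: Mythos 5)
Your proposal is correct and follows exactly the route the paper intends: its entire proof of this lemma is the single line ``similar to Lemma \ref{PM1}, using \eqref{hf5} and \eqref{f>0}'', and your part (ii) reproduces the scaling computation of Lemma \ref{PM1} verbatim (with the $H(\phi)-\phi^2/2$ term replaced by $F(\phi)$), while your part (i) is the standard estimate via $X_0\hookrightarrow L^{\bar N}$. One local misstatement: it is \emph{not} true that \eqref{hf5} gives $|f(t)|\le\delta|t|^{\bar N-1}$ for all $t$ and every $\delta>0$ (that would force $f\equiv 0$); the two limits only yield a single fixed constant $C>0$ with $|f(t)|\le C|t|^{\bar N-1}$, hence $|F(t)|\le C|t|^{\bar N}$, for all $t$ --- but, as you yourself observe, this is all that is needed, since $\bar N>2$ makes the perturbation superquadratic and only smallness of $\rho$ is required, so the displayed chain of inequalities and the conclusion stand with $\delta$ replaced by that fixed $C$.
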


\begin{proof}
The proof is similar as in Lemma \ref{PM1}, using \eqref{hf5} and \eqref{f>0}.
\end{proof}

We set
$$
\Gamma_0=\{ \gamma \in C([0,1],X_0)\,:\,\gamma(0) =0,\  \gamma(1)=e \}
$$  	
and
$$
\sigma_0=\inf_{\gamma \in \Gamma_0}\max_{t \in [0,1]}I_0(\gamma(t)).
$$
Clearly, by Lemma \ref{PM10}, $\sigma_0\ge \alpha>0$.

As in the previous section, we introduce the auxiliary functional $\tilde I_0\in C^1(\R\times X_0,\R)$ given by
\[
\tilde I_0(\t,u)=\frac{e^{(2N-3)\t}}{2}\|u\|^2_0-e^{(2N-1)\t}\ird F(u) \, dV.
\]
The following properties hold, for all $(\t,u)\in \R\times X_0$,
\begin{align*}
\tilde I_0(0,u)&=I_0(u),
\\
\tilde I_0 (\t,u)&=I_0(u(e^{-\t}x,e^{-2\t}y)). 
\end{align*}
We equip $\R\times X_0$ with the standard product norm $\|(\t,u)\|_{\R\times X_0}=(|\t|^2+\|u\|_0^2)^{1/2}$. Arguing as in Lemma \ref{PM1-tilde}, we can see that $\tilde I_0$ satisfies the mountain pass geometry. More precisely, we have
\begin{lem} \label{PM10-tilde} The functional $\tilde I_0:\R\times X_0 \to \mathbb{R}$ verifies the mountain pass geometry, that is, 
\begin{itemize}
\item[(i)] there are $\alpha, \rho>0$ such that
$\tilde I_0(\t,u) \geq \alpha$, for $\|(\t,u)\|_{\R\times X_0}=\rho$;	
\item[(ii)] there is $\tilde e \in \R\times X_0 \setminus\{0\}$ such that $\tilde I_0(\tilde e)<0$ with $\|\tilde e \|_{\R\times X_0}>\rho$. 
\end{itemize}
\end{lem}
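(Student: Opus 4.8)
The plan is to verify the two mountain pass conditions for $\tilde I_0$ by essentially transferring the corresponding properties of $I_0$ established in Lemma \ref{PM10}, exactly as was done for $\tilde I$ in Lemma \ref{PM1-tilde}. Both items are elementary once one keeps track of the scaling identity $\tilde I_0(\t,u)=I_0(u(e^{-\t}x,e^{-2\t}y))$.

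For part $(ii)$, I would simply take $\tilde e=(0,e)$, where $e\in X_0\setminus\{0\}$ is the point produced in Lemma \ref{PM10}$(ii)$. Then $\tilde I_0(0,e)=I_0(e)<0$ by the first scaling identity, and $\|(0,e)\|_{\R\times X_0}=\|e\|_0>\rho$; so $(ii)$ follows immediately with the same $\rho$.

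For part $(i)$, the point is that near the origin the $\theta$-variable is harmless. Writing the assumptions \eqref{hf5} and \eqref{f>0}, one has, for every $\delta>0$, a constant $C_\delta>0$ with $|F(t)|\le \delta t^2 + C_\delta|t|^{\bar N}$ for all $t\in\R$; combined with Lemma \ref{le:l6} (the embedding $X_0\hookrightarrow L^{\bar N}(\R^N)$) and the identity $\tilde I_0(\t,u)=\tfrac12 e^{(2N-3)\t}\|u\|_0^2 - e^{(2N-1)\t}\ird F(u)\,dV$, one estimates
\[
\tilde I_0(\t,u)\ge \tfrac12 e^{(2N-3)\t}\|u\|_0^2 - e^{(2N-1)\t}\bigl(C\delta \|u\|_0^2 + C C_\delta \|u\|_0^{\bar N}\bigr).
\]
On the sphere $\|(\t,u)\|_{\R\times X_0}=\rho$ we have $|\t|\le\rho$, hence $e^{(2N-3)\t}$ and $e^{(2N-1)\t}$ are bounded above and below by positive constants depending only on $\rho$; since $\bar N>2$, choosing first $\delta$ small and then $\rho$ small makes the right-hand side bounded below by a positive constant $\alpha$, which gives $(i)$. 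Alternatively, and more cheaply, one can note that $\|(\t,u)\|_{\R\times X_0}=\rho$ with $\rho$ small forces $|\t|$ small and $\|u\|_0$ small, so $u(e^{-\t}\cdot,e^{-2\t}\cdot)$ stays in a small $\|\cdot\|_0$-ball (the scaling is a bounded perturbation for $|\t|$ small), and one invokes Lemma \ref{PM10}$(i)$ directly after shrinking $\rho$.

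There is no real obstacle here: the only mild subtlety is making sure that the exponential prefactors $e^{(2N-3)\t}$, $e^{(2N-1)\t}$ are controlled uniformly for $(\t,u)$ on the small sphere, which is automatic because $|\t|\le\rho$. This is why the authors can legitimately write that the proof "just follows by Lemma \ref{PM10}" and "arguing as in Lemma \ref{PM1-tilde}"; accordingly I would keep the written proof to one or two sentences, recording the choice $\tilde e=(0,e)$ for $(ii)$ and the remark that $(i)$ is inherited from Lemma \ref{PM10} after shrinking $\rho$ to absorb the bounded factor coming from the scaling in $\t$.
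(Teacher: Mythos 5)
Your overall route is the paper's: for $(ii)$ take $\tilde e=(0,e)$ with $e$ from Lemma \ref{PM10}, and for $(i)$ reduce to Lemma \ref{PM10} after noting that on a small sphere $|\t|\le\rho$ keeps the exponential prefactors $e^{(2N-3)\t}$, $e^{(2N-1)\t}$ uniformly bounded above and below; the paper's own proof is exactly this one-line reduction (it only says ``take $\tilde e=(0,e)$'' and ``$(i)$ follows by Lemma \ref{PM10}'', arguing as in Lemma \ref{PM1-tilde}).

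One step in your explicit estimate for $(i)$ would fail as written, and it is specific to the zero-mass setting. You decompose $|F(t)|\le \delta t^2+C_\delta|t|^{\bar N}$ and then absorb the first term via $\int_{\R^N}\delta u^2\,dV\le C\delta\|u\|_0^2$. But $X_0$ does \emph{not} embed into $L^2(\R^N)$: the norm $\|\cdot\|_0$ contains no $L^2$ piece, the embedding \eqref{continuous} is stated only for $X$, and the whole point of the zero-mass analysis (see the bound $|f(t)t|\le C|t|^{\bar N}$ used in Lemma \ref{le:supK}) is that only the pure power $|t|^{\bar N}$ can be controlled through Lemma \ref{le:l6}. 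The fix is immediate: under \eqref{hf5} the quotient $f(t)/|t|^{\bar N-1}$ vanishes both at $0$ and at infinity, so $|f(t)|\le C|t|^{\bar N-1}$ and hence $|F(t)|\le C|t|^{\bar N}$ for all $t$, giving $\ird F(u)\,dV\le C\|u\|_0^{\bar N}$ with no $L^2$ term; since $\bar N>2$ the conclusion follows for $\rho$ small. Your ``cheaper'' alternative (shrink $\rho$ and invoke Lemma \ref{PM10}$(i)$ directly) sidesteps the issue and coincides with what the paper actually does, so with the $t^2$ term deleted the proof is correct and matches the paper.
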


%

Hence we define the mountain pass level $\tilde \sigma_0$ for $\tilde I_0$ by
 $$
\tilde \sigma_0=\inf_{\tilde\gamma \in \tilde \Gamma_0}\max_{t \in [0,1]}\tilde I_0(\tilde \gamma(t)),
$$
where
$$
\tilde \Gamma=\{ \tilde \gamma \in C([0,1],\mathbb{R}\times X_0)\,:\,\tilde \gamma(0) =(0,0),\  \tilde\gamma(1)=(0,e) \}.
$$  	
From definition $\sigma_0$, we have $\tilde \sigma_0\ge \alpha>0$, $\sigma_0=\tilde \sigma_0$ and the following proposition 

\begin{prop}\label{pr:sequence0}
There exists a sequence $\{(\t_n,u_n)\} \subset \R \times X_0$ such that, as $n \to +\infty$, we get 
\begin{enumerate}
\item $\t_n \to 0$;
\item $\tilde{I_0}(\t_n,u_n)\to \sigma$; 
\item $\de_\t\tilde{I_0}(\t_n,u_n)\to 0$; 
\item $\de_u\tilde{I_0}(\t_n,u_n)\to 0$ strongly in $X_0^*$. 
\end{enumerate}

\end{prop}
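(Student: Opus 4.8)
The plan is to mimic the construction used for Proposition~\ref{pr:sequence} in the positive mass case, transferring it verbatim to the zero mass functional $\tilde I_0$ on $\R\times X_0$. First I would record that, by Lemma~\ref{PM10-tilde}, $\tilde I_0$ has the mountain pass geometry, so the level $\tilde\sigma_0$ is well defined and strictly positive, and that $\tilde\sigma_0=\sigma_0$ by the same homotopy/reparametrization argument as in \cite[Lemma~4.1]{HIT}: given $\tilde\gamma\in\tilde\Gamma_0$, the curve $t\mapsto \tilde\gamma(t)_{2}(\cdot)$ (the $X_0$-component) reparametrized in $X_0$ lies in $\Gamma_0$ and has the same max, so $\sigma_0\le\tilde\sigma_0$; conversely any $\gamma\in\Gamma_0$ gives $\tilde\gamma=(0,\gamma)\in\tilde\Gamma_0$ with equal max, so $\tilde\sigma_0\le\sigma_0$. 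Since the equality $\tilde\sigma_0=\sigma_0$ together with $\tilde\sigma_0\ge\alpha>0$ is already asserted in the excerpt just before the proposition, this part can be cited rather than repeated.

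Next I would invoke the Ekeland-type lemma. The statement of Lemma~\ref{le:ekeland} is purely abstract: it only uses that $\tilde I\in C^1(\R\times X,\R)$, that the mountain pass levels coincide, and the structure of the path space, so the identical statement holds for $\tilde I_0$ on $\R\times X_0$ with $\sigma$ replaced by $\sigma_0$ (I would state this as ``arguing exactly as in Lemma~\ref{le:ekeland}, with $\tilde I$, $X$, $\sigma$ replaced by $\tilde I_0$, $X_0$, $\sigma_0$''). Then, for each $n$, pick $\eps_n=1/n$ and a path $\tilde\gamma_n\in\tilde\Gamma_0$ that is $\eps_n$-almost optimal, i.e. $\max_{t\in[0,1]}\tilde I_0(\tilde\gamma_n(t))\le\sigma_0+\eps_n$ (such a path exists by definition of the infimum $\tilde\sigma_0=\sigma_0$). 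Applying the Ekeland lemma produces $(\t_n,u_n)\in\R\times X_0$ with $\tilde I_0(\t_n,u_n)\in[\sigma_0-\eps_n,\sigma_0+\eps_n]$, $\|D\tilde I_0(\t_n,u_n)\|_{(\R\times X_0)^*}\le 2\sqrt{\eps_n}$, and ${\rm dist}_{\R\times X_0}\big((\t_n,u_n),\tilde\gamma_n([0,1])\big)\le 2\sqrt{\eps_n}$. Letting $n\to+\infty$ gives items (2), (3), (4) directly (splitting the differential norm bound into its $\t$- and $u$-components yields $\de_\t\tilde I_0(\t_n,u_n)\to0$ and $\de_u\tilde I_0(\t_n,u_n)\to0$ strongly in $X_0^*$).

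It remains to obtain item (1), $\t_n\to0$. For this I would use the distance estimate: since every path in $\tilde\Gamma_0$ has endpoints $(0,0)$ and $(0,e)$, and by definition of $\tilde\sigma_0=\sigma_0$ the $\R$-component of the almost-optimal path $\tilde\gamma_n$ can be taken identically zero — indeed the natural choice is $\tilde\gamma_n(t)=(0,\gamma_n(t))$ with $\gamma_n\in\Gamma_0$ almost optimal for $\sigma_0$, exactly as one does for $\tilde I$ using $\sigma=\tilde\sigma$ — the first coordinate of any point of $\tilde\gamma_n([0,1])$ is $0$. Hence ${\rm dist}_{\R\times X_0}\big((\t_n,u_n),\tilde\gamma_n([0,1])\big)\le 2\sqrt{\eps_n}$ forces $|\t_n|\le 2\sqrt{\eps_n}\to0$. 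This is precisely the argument behind Proposition~\ref{pr:sequence}, so I would simply write ``arguing as in \cite{HIT}, from Lemma~\ref{le:ekeland} applied with $\eps=\eps_n\to0^+$ to a sequence of almost-optimal paths in $\tilde\Gamma_0$ of the form $(0,\gamma_n)$, we obtain the claimed sequence.'' The main (and only mild) obstacle is bookkeeping: making sure the abstract Ekeland lemma and the level-coincidence lemma are genuinely insensitive to replacing the extra $|u|_2^2$ mass term in $\tilde I$ by nothing in $\tilde I_0$ — they are, because neither proof uses that term — so no new analysis is needed and the proposition follows by repeating the positive-mass argument word for word.

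\begin{proof}
Arguing exactly as in Lemma~\ref{le:ekeland}, with $\tilde I$, $X$ and $\sigma$ replaced by $\tilde I_0$, $X_0$ and $\sigma_0$, and recalling that $\sigma_0=\tilde\sigma_0\ge\alpha>0$, for every $\eps>0$ and every $\tilde\gamma\in\tilde\Gamma_0$ with $\max_{t\in[0,1]}\tilde I_0(\tilde\gamma(t))\le\sigma_0+\eps$ there exists $(\t,u)\in\R\times X_0$ such that ${\rm dist}_{\R\times X_0}\big((\t,u),\tilde\gamma([0,1])\big)\le 2\sqrt{\eps}$, $\tilde I_0(\t,u)\in[\sigma_0-\eps,\sigma_0+\eps]$ and $\|D\tilde I_0(\t,u)\|_{(\R\times X_0)^*}\le 2\sqrt{\eps}$. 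Now fix $\eps_n=1/n$ and, using $\sigma_0=\tilde\sigma_0$, choose $\gamma_n\in\Gamma_0$ with $\max_{t\in[0,1]}I_0(\gamma_n(t))\le\sigma_0+\eps_n$; then $\tilde\gamma_n:=(0,\gamma_n)\in\tilde\Gamma_0$ satisfies $\max_{t\in[0,1]}\tilde I_0(\tilde\gamma_n(t))\le\sigma_0+\eps_n$ and its first coordinate vanishes identically. Applying the above to $\tilde\gamma_n$ gives $(\t_n,u_n)\in\R\times X_0$ with
\[
|\t_n|\le {\rm dist}_{\R\times X_0}\big((\t_n,u_n),\tilde\gamma_n([0,1])\big)\le 2\sqrt{\eps_n},
\]
$\tilde I_0(\t_n,u_n)\in[\sigma_0-\eps_n,\sigma_0+\eps_n]$ and $\|D\tilde I_0(\t_n,u_n)\|_{(\R\times X_0)^*}\le 2\sqrt{\eps_n}$. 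Letting $n\to+\infty$, the first estimate yields $\t_n\to0$, the second yields $\tilde I_0(\t_n,u_n)\to\sigma_0$, and splitting the last estimate into the $\t$- and $u$-components gives $\de_\t\tilde I_0(\t_n,u_n)\to0$ and $\de_u\tilde I_0(\t_n,u_n)\to0$ strongly in $X_0^*$. This is the desired sequence.
\end{proof}
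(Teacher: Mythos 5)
Your proposal is correct and follows exactly the route the paper intends: the paper itself gives no separate proof of Proposition \ref{pr:sequence0}, merely asserting it by analogy with Proposition \ref{pr:sequence} (via the level coincidence $\sigma_0=\tilde\sigma_0$ and the Ekeland-type Lemma \ref{le:ekeland} applied to almost-optimal paths of the form $(0,\gamma_n)$, following \cite{HIT,jeanjean}), and your write-up supplies precisely those steps, including the correct use of the distance estimate to force $\t_n\to 0$. Nothing in the argument depends on the presence or absence of the $|u|_2^2$ term, so the transfer from $\tilde I$ on $\R\times X$ to $\tilde I_0$ on $\R\times X_0$ is as routine as you claim.
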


Now, we are going to prove Theorem \ref{T2}.

\begin{proof}[Proof of Theorem \ref{T2}]
By Proposition \ref{pr:sequence0}, there exists a sequence $\{(\t_n,u_n)\} \subset \R \times X_0$ such that, as $n \to +\infty$, we have
\begin{equation}\label{sistema0}
\begin{cases}
\dis\frac{e^{(2N-3)\t_n}}{2}\|u_n\|^2_0-e^{(2N-1)\t_n}\ird F(u_n) \, dV=\sigma_0+o_n(1),
\\[4mm]
\dis\frac{e^{(2N-3)\t_n}}{2}\|u_n\|^2_0-(2N-1)e^{(2N-1)\t_n}\ird F(u_n) \, dV=o_n(1),
\\[4mm]
\dis e^{(2N-3)\t_n}\|u_n\|^2_0-e^{(2N-1)\t_n}\ird f(u_n)u_n \, dV=o_n(1)\|u_n\|_0.
\end{cases}
\end{equation}
From the first and the second equation of the previous system we get
\begin{equation}\label{3sigma0}
(N-1)e^{\t_n}\|u_n\|^2_0=(2N-1)\sigma_0+o_n(1),
\end{equation}
and so, since $\t_n \to 0$, we infer that $\{u_n\}$ is bounded in $X_0$ and so also in $L^{\bar{N}}(\RD)$, by Lemma \ref{le:l6}.
Hence, by the third equation of \eqref{sistema0} and \eqref{3sigma0}, using again the fact that  $\t_n \to 0$, there exists $c>0$ such that 
\begin{equation*}
\ird f(u_n)u_n \, dV\ge c>0, \quad \forall n \in \mathbb{N}.
\end{equation*}
Hence, by Lemma \ref{le:supK}, there exist a sequence of points $\{(x_n,y_n)\}\subset\RD$ and $\bar c>0$ such that 
$$
\int_{K(x_n,y_n)}|f(u_n)u_n|\,dV \ge \bar c> 0, \quad \forall n \in \mathbb{N}.
$$
Hence, calling $v_n=u(\cdot-x_n,\cdot-y_n)$, and being $\{v_n\}$ a bounded sequence in $X_0$, up to a subsequence, we have 
\[
v_n \rightharpoonup v \qquad \hbox{weakly in }X_0.
\]
By \eqref{hf5}, there is $C>0$ such that
$$
|f(t)t| \leq \frac{\bar c}{2M}|t|^{\bar{N}}+C|t|^{2}, \quad \forall t \in \mathbb{R},
$$  
where $M=\sup_{n \in \mathbb{N}} \int_{\mathbb{R}^N}|v_n|^{\bar{N}}\,dV$. From this, 
$$
C\int_{K(0,0)}|v_n|^{2}\,dV \geq \frac{\bar c}{2}, \quad \forall n \in \mathbb{N}
$$ 
and so, by (\ref{compact2}), 
$$
\int_{K(0,0)}|v|^{2}\,dV \geq \frac{\bar c}{2C}, \quad \forall n \in \mathbb{N},
$$ 
implying that $v \neq 0$. 
\\
Now, we will prove that $v$ is a solution for (\ref{eq4'}). To this end, without loss of generality, we can assume that
$$
v_n(x,y) \to v(x,y) \quad \mbox{a.e. in} \ \mathbb{R}^N,
$$
and so, by continuity, 
$$
f(v_n(x,y)) \to f(v(x,y)) \quad \mbox{a.e. in} \ \mathbb{R}^N.
$$
By \eqref{hf5},  $\{f(v_n)\}$ is a bounded sequence in $L^{\frac{\bar{N}}{\bar{N}-1}}(\mathbb{R}^N)$, and so, there exists $g\in L^{\frac{\bar{N}}{\bar{N}-1}}(\mathbb{R}^N)$ such that
$f(v_n) \rightharpoonup g$ in  $L^{\frac{\bar{N}}{\bar{N}-1}}(\mathbb{R}^N)$. 
It is standard to prove that $g=f(v)$. These informations yield, in particular, that
$$
\int_{\mathbb{R}^N}f(v_n) \phi \, dV \to \int_{\mathbb{R}^N}f(v) \phi \, dV,\quad \forall \phi \in X_0.
$$
This limit combines with $\de_u\tilde{I}_0(\t_n,v_n) [\phi] \to 0$ to give $I'_0(v)\phi=0$, for any $\phi \in X_0$, showing that $v$ is a nontrivial for (\ref{eq4'}).

\end{proof}

\end{document}